\newtheorem{theorem}{Theorem}[section]
\newtheorem{lemma}[theorem]{Lemma}
\theoremstyle{definition}
\newtheorem{definition}[theorem]{Definition}
\theoremstyle{remark}
\theoremstyle{claim}
\newtheorem{remark}[theorem]{Remark}
\numberwithin{equation}{section}
\numberwithin{equation}{section}
\newsavebox{\savepar}
\begin{document}
%	\begin{frontmatter}		
		\title[A fractional $p$-Kirchhoff problem]{Existence of at least $k$ solutions to a fractional $p$-Kirchhoff problem involving singularity and critical exponent}
		
\author[S. Ghosh]{Sekhar Ghosh}
\address[S. Ghosh]{Department of Mathematics, National Institute of Technology Calicut, Calicut, 673601, Kerala, India}
\email{sekharghosh1234@gmail.com, sekharghosh@nitc.ac.in}

\author[D. Choudhuri]{Debajyoti Choudhuri}
\address[D. Choudhuri]{School of Basic Sciences, Indian Institute of Technology Bhubaneswar, Khordha - 752050, Odisha, India}
\email{dc.iit12@gmail.com, dchoudhuri@iitbbs.ac.in}

\author[A. Fiscella]{Alessio Fiscella}
\address[A. Fiscella]{Dipartimento di Matematica e Applicazioni, Universit\`a degli Studi di Milano-Bicocca, Via Cozzi 55, Milano, 20125, Italy}
\email{alessio.fiscella@unimib.it}

\maketitle
	
		\begin{abstract}
			We study the existence of nonnegative solutions to the following nonlocal elliptic problem involving singularity
			\begin{align}
				\mathfrak{M}\left(\int_{Q}\frac{|u(x)-u(y)|^p}{|x-y|^{N+ps}}dxdy\right)(-\Delta)_{p}^{s} u&=\frac{\lambda}{|u|^{\gamma-1}u}+|u|^{p_s^*-2}u~\text{in}~\Omega,\nonumber\\
				u&>0~\text{in}~\Omega,\nonumber\\
				u&=0~\text{in}~\mathbb{R}^N\setminus\Omega,\nonumber
			\end{align}			
			where $\Omega\subset\mathbb{R}^N$, is a bounded domain with Lipschitz boundary, $\lambda>0$, $N>ps$, $0<s,\gamma<1$, $(-\Delta)_{p}^{s}$ is the fractional $p$-Laplacian operator for $1<p<\infty$ and $p_s^*=\frac{Np}{N-ps}$ is the critical Sobolev exponent. We employ a {\it cut-off} argument to obtain the existence of $k$ (being arbitrarily large integer) solutions. Furthermore, by using the Moser iteration technique, we prove an uniform $L^{\infty}({\Omega})$ bound for the solutions. The novelty of this work lies in proving the existence of small energy solutions by using symmetric mountain pass theorem in spite of the presence of a critical nonlinear term which, of course, is super-linear.
		\end{abstract}
		\begin{flushleft}
{\bf Keywords}:~Fractional $p$-Laplacian, Critical Exponent, Concentration-Compactness Principle, Genus, Symmetric Mountain Pass Theorem, Singularity.\\
			%% keywords here, in the form: keyword \sep keyword
			%MSC codes here, in the form: 
			{\bf AMS Classification}:~ 35R11, 35J60, 35J75.
			%% or \MSC[2008] code \sep code (2000 is the default)46E35, 46B50
			
		\end{flushleft}
		
	%\end{frontmatter}
	
	%%
	%% Start line numbering here if you want
	%%
	%\linenumbers

	\section{Introduction}	
	\noindent This paper aims to study the following nonlocal Kirchhoff-type elliptic problem involving singularity and critical exponent
	\begin{align*}\label{main p}	
	\mathfrak{M}\left(\int_{Q}\frac{|u(x)-u(y)|^p}{|x-y|^{N+ps}}dxdy\right)(-\Delta)_{p}^{s} u&=\frac{\lambda}{|u|^{\gamma-1}u}+|u|^{p_s^*-2}u~\text{in}~\Omega,\\
	u&>0~\text{in}~\Omega,\\
	u&=0~\text{in}~\mathbb{R}^N\setminus\Omega,\tag{P}
	\end{align*}
	where $\Omega\subset\mathbb{R}^N$ is a bounded domain with Lipschitz boundary, $\lambda>0$, $N>ps$, $0<s, \gamma<1$ and $p_s^*=\frac{Np}{N-ps}$ is the critical Sobolev exponent. The Kirchhoff function $\mathfrak{M}$ is supposed to satisfy the following conditions:
	\begin{itemize}
		\item[$(\mathfrak{m}_1)$] The function $\mathfrak{M}:\mathbb{R}^+\rightarrow\mathbb{R}^+$ is continuous and there exists $\theta\in\left(1,\frac{p_s^*}{p}\right)$ such that $t\mathfrak{M}(t)\leq\theta\mathcal{M}(t)$ for all $t\geq 0$, where $\mathcal{M}(t)=\int_0^t\mathfrak{M}(\tau)d\tau$.
		%\item[$(\mathfrak{m}_2)$]  $\underset{t\geq0}{\inf}\{\mathfrak{M}(t)\}=\mathfrak{m}_0>0$.
		\item [$(\mathfrak{m}_2)$] There exists $\mathfrak{m}_0>0$ such that $\mathcal{M}(t)\geq \mathfrak{m}_0t^{\theta-1}$ for $t\in[0,1]$.
		\item  [$(\mathfrak{m}_3)$] For any $\tau>0$, there exists $\kappa=\kappa(\tau)>0$ such that $\mathfrak{M}(t)\geq\kappa$ for all $t\geq\tau$.
	\end{itemize}
	Here, $\mathfrak{M}$ is a degenerate Kirchhoff function if $\mathfrak{M}(0)=0$, otherwise $\mathfrak{M}$ is said to be a non-degenerate Kirchhoff function. We define the fractional $p$-Laplacian as follows.
	\begin{eqnarray}
		(-\Delta)_p^su(x)=C_{N,s}\lim_{\varepsilon\rightarrow 0}\int_{\mathbb{R}^N\setminus B_{\varepsilon}(x)}\frac{|u(x)-u(y)|^{p-2}(u(x)-u(y))}{|x-y|^{N+sp}}dy,~\text{in}~x\in\mathbb{R}^N.\nonumber
	\end{eqnarray}
	Recently a lot of attention has been paid to the elliptic problems involving local/nonlocal operators with singularities. These works are not only important from the mathematical point of view but also have lots of applications, for instance in thin obstacle problems \cite{Silvestre2007}, problems on minimal surfaces \cite{Caffarelli2010}, fractional quantum mechanics \cite{Laskin2000} etc.(refer \cite{Nezza2012, Bisci2016, Papageorgiou2019} and the references therein).	``{\it The problem draws its motivation from the models presented by Kirchhoff in 1883 as a generalization of the D'Alembert wave equation.
		$$\rho\frac{\partial^2u}{\partial t^2}-\left(a+b\int_{0}^{l}\left|\frac{\partial u}{\partial x}\right|^2dx\right)\frac{\partial^2u}{\partial x^2}=g(x,u)$$
	where $a, b, \rho$ are positive constants and $l$ is the changes in the length of the strings due to the vibrations.}" An elaborate detailing on these for the fractional counterpart can be found in \cite[Appendix A]{Fiscella2014}, \cite{Pucci2016} and the references therein. For further details on practical applications, one may refer \cite{Alves2005,Caffarelli2012, Bisci2014} and the references therein. Elliptic problems with a singularity have not only been important but also a tough challenge to the mathematical community. The roots of the problem can be traced back to a celebrated work due to Lazer and McKenna \cite{Lazer1991}, where the authors considered the following problem
	\begin{eqnarray}
		-\Delta u&=&p(x)u^{-\gamma},~\text{in}~\Omega\nonumber\\
		u&=&0,~\text{on}~\partial\Omega\nonumber
	\end{eqnarray}
	where $\Omega\subset\mathbb{R}^N$ is a sufficiently regular domain. Also $p$ is
	a sufficiently regular function which is positive in $\overline{\Omega}$. The solution $u$ is in $W_0^{1,2}(\Omega)$ if and only if $\gamma<3$. The authors in \cite{Lazer1991} proved that if $\gamma >1$, then $u$ is not in $C^1(\overline{\Omega})$ whereas for $0<\gamma<1$ the solution obtained is a classical solution. Thereafter a lot of work on elliptic problems involving a singularity has also been considered whose existence and multiplicity results have been investigated. As a passing list of references on some pioneering study of problems involving a purely singular term one may refer to \cite{Boccardo2009,Canino2017,Crandall1977} and the references therein. With the development of newer tools in functional analysis, the problems with a singularity also became richer.  One such instance is a problem investigated by Giacomoni et al.  \cite{Giacomoni2007}. The problem is as follows
	\begin{eqnarray}\label{giaco}
		-\Delta_p u&=&\lambda u^{-\gamma}+u^q,~\text{in}~\Omega\nonumber\\
		u&=&0,~\text{on}~\partial\Omega\nonumber\nonumber\\
		u&>&0,~\text{in}~\Omega
	\end{eqnarray}
	where $1<p-1<q\leq p^*-1$, $\lambda>0$, $0<\gamma<1$. Here the authors have proved the existence of two positive solutions. Similar type of results to obtain existence and multiplicity (finitely many) of solutions can be found in \cite{Giacomoni2009, Haitao2003,Mukherjee2016,Saoudi2017,Saoudi2019} and the references therein. Recently, Saoudi et al. \cite{Saoudi2019} considered a fractional $p$-Laplacian version of \eqref{giaco} and proved the existence of two solutions to it by using a variational methods. Besides this the authors in \cite{Saoudi2019} used Moser's iteration method to prove that the solutions are in $L^{\infty}$. A $W_0^{s,p}$ versus $C^1$ analysis has also been discussed in it.\\
	We now focus on some of the work which bears the Kirchhoff term, with or without singularities. Moving on from here, we now turn our attention to problems involving a critical exponent. If one considers $\mathfrak{M}=1$, $\lambda=0$ in \eqref{main p}, then the problem reduces to the following
	\begin{align}\label{crit_prob1}
		(-\Delta)_p^s u&=|u|^{p_s^*-2}u,~\text{in}~\Omega\nonumber\\
		u&=0,~\text{on}~\mathbb{R}^N\setminus\partial\Omega.
	\end{align}
	The main hurdle with problems with critical exponent is the lack of compact embedding $W_0^{s,p}(\Omega)\hookrightarrow L^{p^*_s}(\Omega)$. Such problems are tackled by the concentration-compactness principle introduced by Lions \cite{Lions1985, Lions1985a} for the classical Sobolev spaces $W^{1,p}(\Omega)$. The nonlocal version of this principle has also been developed and can be found in \cite[Theorem 2.2]{Xiang2017}. The literature pertaining to these type of problems without singularity are so vast that it can't be discussed here in this section completely. However, the readers may refer to the books \cite{Bisci2016,Papageorgiou2019} and the references therein. Of late, existence and multiplicity of solutions to the Kirchhoff problem \eqref{main p} have been investigated by many researchers. The reader may refer to \cite{Fiscella2019,Fiscellapk2019,Hsini2019,Wang2021} and the references therein. In \cite{Fiscellapk2019,Hsini2019}, the authors have established the existence of at least two solutions by employing the Nehari manifold method. The authors in \cite{Wang2021} investigated the existence of two solutions together with a Choquard term of a problem of the type \eqref{main p}. Fiscella \cite{Fiscella2019} has employed the variational method in combination with a perturbation method to guarantee the existence of two solutions to the problem \eqref{main p} even if the Kirchhoff function is degenerate. It is important to note that in all these studies the authors guaranteed the existence of two solutions by employing different variational tools. In this article, we establish the existence of arbitrarily many small solutions to the problem \eqref{main p} by employing symmetric mountain pass theorem. The symmetric mountain pass theorem is mainly used to guarantee the existence of infinitely many solutions. Usually it is necessary to apply the symmetric mountain pass theorem to an elliptic PDE that the corresponding to a $C^1$ energy. This requirement fails to hold due to the singularity. We have tackled this delicate issue by employing a cut-off technique.
	
	One of the earliest study for the existence of infinitely many solutions to problems with critical exponent is due to the pioneering work by Azorero and Alosonso \cite{Garcia1991}, who have considered the following problem
	\begin{align}\label{crit_prob2}
		-\Delta_p u&=|u|^{p^*-2}u+\lambda |u|^{q-2}u,~\text{in}~\Omega\nonumber\\
		u&=0,~\text{on}~\partial\Omega
	\end{align}
	for $1<q<p$, $\lambda>0$. Here the authors have used the Lusternik-Schnirelman's theory to guarantee the existence of infinitely many solutions. The problem in \cite{Garcia1991} was further generalized by Li and Zhang \cite{Li2009} with the driving operator being $-\Delta_p-\Delta_q$. The reader may also refer to the work due to Figueiredo \cite{Figueiredo2013}. We now throw some light on the $p$-Kirchhoff problems of the following type that has been discussed in Khiddi and Sbai \cite{Khiddi2020}
	\begin{align*}
		\mathfrak{M}\left(\iint_{\mathbb{R}^N}\frac{|u(x)-u(y)|^p}{|x-y|^{N+sp}}dxdy\right)(-\Delta)_p^su&=\lambda H(x)|u|^{q-2}u+ |u|^{p_s^*-2}u,~\text{in}~\Omega\\
		u&=0,~\text{in}~\mathbb{R}^N\setminus\Omega,
	\end{align*}
	where $\lambda>0$, $1<q<p<p_s^*<\infty$. The authors in \cite{Khiddi2020} have guaranteed the existence of infinitely many solutions. Such type of problems have led to the generalization of a few classical results for the case of $\mathfrak{M}=1.$ In \cite{Xiang2015a}, the authors employed the Fountain and the dual Fountain theorem to guarantee the existence of infinitely many solutions for a symmetric subcritical Kirchhoff problem for a non-degenerate $\mathfrak{M}$ and $p\geq 2$. In \cite{Fiscella2016}, the authors dealt with the case when $p=2$ and used the notion of Krasnoselskii's genus (refer \cite{Rabinowitz1986}) to obtain the existence of infinitely many solutions. Further in \cite{Xiang2016} the authors had a similar conclusion but for a system of PDEs with subcritical degenerate Kirchhoff function. This is in no way a complete picture of the literature developed so far as it is vast. What we can do at this point is to direct the attention of the reader to the problem which prompted us to take up \eqref{main p}. The motivation of this problem was drawn from the results due to Azorero et al. \cite{Garcia1991}, Khiddi-Sbai \cite{Khiddi2020}. The literature consisting the study of infinitely many solution mainly deals with the concave-convex data, which may be both sub-linear as well as super-linear.  Recently, in its first kind the study due to \cite{Ghosh2019} guarantees the existence of infinitely solutions involving a singularity.
	
	Motivated from the above studies, in this article, we prove that problem \eqref{main p} possesses at least $k\in\mathbb{N}$ solutions (for arbitrarily large $k$) within a finite range of $\lambda$ whose space norms converges to zero. It is worthy to mention here that the symmetric mountain pass theorem plays a key role to study the existence of infinitely many solutions to a PDE. The symmetric mountain pass theorem has two type of conclusions consisting a sequence of solutions. One is for sub-linear data in which the space norm of the solutions converges to zero another one is for the super-linear data which says the space norm of solutions goes to infinity. The major hurdles to us were to figure out a way to tackle the singular term as well as the critical exponent term, which is super-linear in the problem \eqref{main p} and then to show that as $k$ increases, the space norm of solutions decreases toward zero. To add to these issues, the functional also fails to be coercive. The main result proved in this article is the following.
\begin{theorem}\label{thm main}
	Let $\mathfrak{m}_1$-$\mathfrak{m}_2$ hold and $0<\gamma<1$. Then for any $k\in\mathbb{N}$ (arbitrarily large), there exists $\lambda_*>0$ such that whenever $0<\lambda<\lambda_*$,  problem \eqref{main p} has at least $k$ non-negative weak solutions $\{u_1,u_2,\ldots, u_k,\ldots\}$ such that $J_{\lambda}(u_n)<0$, for all $n=1,2,\ldots,k,\ldots$ In addition, as $k$ increases, then the norms of $J_{\lambda}(u_k)$ and $u_k$ decreases.  Furthermore, each solution of \eqref{main p} belongs to $L^{\infty}(\bar{\Omega})$.
\end{theorem}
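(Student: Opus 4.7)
The plan is to obtain the $k$ solutions as critical points of a suitably truncated version of the natural energy functional
$$J_\lambda(u) \;=\; \frac{1}{p}\,\mathcal{M}(\|u\|^p) \;-\; \frac{\lambda}{1-\gamma}\int_\Omega |u|^{1-\gamma}\,dx \;-\; \frac{1}{p_s^*}\int_\Omega |u|^{p_s^*}\,dx,$$
with $\|u\|^p := \int_Q |u(x)-u(y)|^p|x-y|^{-N-sp}\,dx\,dy$, via the symmetric variant of the Lusternik--Schnirelman principle based on Krasnoselskii's genus. Since the critical term renders $J_\lambda$ unbounded from below, I would first introduce a smooth non-increasing cut-off $\varphi:[0,\infty)\to[0,1]$ with $\varphi\equiv 1$ on $[0,R_0]$ and $\varphi\equiv 0$ on $[2R_0,\infty)$, for a threshold $R_0>0$ to be fixed later, and study
$$\widetilde J_\lambda(u) \;=\; \frac{1}{p}\,\mathcal{M}(\|u\|^p) \;-\; \frac{\lambda}{1-\gamma}\int_\Omega |u|^{1-\gamma}\,dx \;-\; \frac{\varphi(\|u\|^p)}{p_s^*}\int_\Omega |u|^{p_s^*}\,dx.$$
Using $(\mathfrak{m}_1)$--$(\mathfrak{m}_2)$ and the fractional Sobolev embedding, $\widetilde J_\lambda$ is even and $C^1$, and for $R_0$ small enough it is bounded from below and coercive on $W_0^{s,p}(\Omega)$. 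The key observation is that any critical point $u$ of $\widetilde J_\lambda$ satisfying $\|u\|^p<R_0$ is automatically a weak solution of \eqref{main p}.

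Next, I would establish a local Palais--Smale condition for $\widetilde J_\lambda$ at every negative level. Boundedness of $(PS)_c$ sequences follows from coercivity; passing to a weakly convergent subsequence $u_n\rightharpoonup u$ in $W_0^{s,p}(\Omega)$, the critical term is handled through a Brezis--Lieb decomposition combined with the Lions concentration--compactness principle adapted to the fractional $p$-Laplacian, while the singular term is treated via dominated convergence together with the pointwise monotonicity of $t\mapsto t^{-\gamma}$. Since the cut-off freezes the critical contribution once $\|u\|^p\geq 2R_0$, concentration is ruled out at negative energy levels. For the multiplicity step, for each $k\in\mathbb N$, I would select a $k$-dimensional subspace $E_k\subset W_0^{s,p}(\Omega)$ spanned by functions with pairwise disjoint supports; on the unit sphere $S_k\subset E_k$ the singular energy $\int_\Omega |u|^{1-\gamma}$ is uniformly positive, so for small $r>0$ the set $A_k := r S_k$ is symmetric, compact, of Krasnoselskii genus $k$, and $\sup_{A_k}\widetilde J_\lambda<0$. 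Setting
$$c_j \;:=\; \inf\Bigl\{\sup_{u\in A}\widetilde J_\lambda(u)\,:\,A\subset W_0^{s,p}(\Omega)\setminus\{0\}\text{ symmetric closed, genus}\geq j\Bigr\},$$
the classical deformation lemma produces critical values $c_1\leq c_2\leq\cdots\leq c_k<0$ and associated critical points $u_1,\ldots,u_k$ of $\widetilde J_\lambda$.

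To conclude, I would test the Euler--Lagrange equation of $u_j$ with $u_j$ and combine with the negativity of its energy to obtain an a priori estimate $\|u_j\|\leq C\lambda^{\sigma}$ for some $\sigma>0$. Choosing $\Lambda$ small enough then forces $\|u_j\|^p<R_0$ for every $j\leq k$, so each $u_j$ solves \eqref{main p}. Nonnegativity follows by testing with $u_j^-$ and exploiting the strict positivity of the singular nonlinearity to deduce $u_j^-\equiv 0$. The monotonicity of $|c_j|$ in $j$ is built into the genus definition; $\|u_k\|\to 0$ as $k\to\infty$ follows by comparing $c_k$ with $\sup_{A_k}\widetilde J_\lambda$, which shrinks with the radius $r$ needed to make $E_k$ admissible, together with the coercivity of $\widetilde J_\lambda$. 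Finally, the $L^{\infty}$ assertion is obtained by a Moser iteration on powers $|u|^{(p-1)(\beta-1)+p}$, testing with appropriate truncations; the hypothesis $\gamma<1$ guarantees integrability of the singular term against such test functions, and the $L^{p_s^*}$ bound absorbs the critical contribution at each iterate.

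The main obstacle I anticipate is the verification of the Palais--Smale condition in the joint presence of the singular and critical terms: the singular nonlinearity is not directly amenable to Brezis--Lieb splitting, so one must exploit the convexity/monotonicity of $t\mapsto t^{1-\gamma}$ to pass to the limit in the weak formulation, while simultaneously controlling the critical term via a sharp estimate against the best fractional Sobolev constant. A second delicate point is ensuring that the critical points of $\widetilde J_\lambda$ actually lie in the region where the cut-off is inactive, which hinges on estimating $\|u_j\|$ by a power of $\lambda$ uniformly in $j\leq k$. Once these two items are settled, the remaining conclusions---negativity of energies, monotone decay of $|c_k|$ and $\|u_k\|$, and the $L^\infty$ bound---follow from essentially standard machinery.
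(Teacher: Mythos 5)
Your overall architecture (norm truncation \`a la Garc\'{\i}a Azorero--Peral, genus/symmetric minimax, concentration--compactness at negative levels, Moser iteration) matches the paper's, but there is one genuine gap at the very start that the rest of your argument silently relies on: you assert that $\widetilde J_\lambda$ is $C^1$, and it is not. The term $-\frac{\lambda}{1-\gamma}\int_\Omega |u|^{1-\gamma}\,dx$ with $0<\gamma<1$ has formal derivative $-\lambda\int_\Omega |u|^{-\gamma}\mathrm{sgn}(u)\,\varphi\,dx$, which is singular on $\{u=0\}$ and does not define a continuous map $X_0\to X_0^{*}$; your cut-off $\varphi(\|u\|^p)$ only multiplies the critical term and does nothing to tame the singularity. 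Since the deformation lemma, the Krasnoselskii-genus minimax scheme, and Kajikiya's symmetric mountain pass theorem all require a $C^1$ functional, your multiplicity step does not launch as written. The paper resolves exactly this point before any truncation in the norm: it first solves the purely singular auxiliary problem $\mathfrak{m}_0(-\Delta_p)^s w=\lambda|w|^{-\gamma-1}w$ to obtain a positive subsolution $\underline{u}_\lambda$, and then replaces the nonlinearities pointwise by cut-off functions $g,f$ that freeze them on $\{|t|\le\underline{u}_\lambda\}$; the resulting functional $I_\lambda$ is genuinely $C^1$ and even, and only then is the norm truncation $\overline{I}_\lambda$ introduced. You need this (or an equivalent regularization of the singularity) for your plan to be viable.

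This omission also explains why you miss the reason the theorem claims only $k$ solutions rather than infinitely many. The minimax scheme produces a sequence of critical points of the regularized functional with $\|u_n\|\to 0$; since the pointwise cut-off is active where $|u_n|\le\underline{u}_\lambda$, the tail of the sequence may fail to solve the original problem, and one recovers exactly $k$ genuine solutions by shrinking $\lambda$. Two smaller remarks: your proposed a priori bound $\|u_j\|\le C\lambda^\sigma$ to deactivate the norm cut-off is replaced in the paper by the more elementary observation that $\overline{I}_\lambda(u)<0$ already forces $\|u\|<r_0$ (where the truncation is the identity); and in the genus lower bound the paper must work around the fact that $1-\gamma<1$ prevents the usual concave-power argument, using equivalence of norms on a finite-dimensional subspace to get $\int_\Omega|u|^{1-\gamma}\ge C\|u\|$ for small $\|u\|$ --- your disjoint-support construction would need the same care. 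The Palais--Smale and Moser iteration portions of your plan are otherwise consistent with the paper's.
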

\begin{remark}
	 The conclusion of the Theorem \ref{thm main} holds true even if we consider subcritical but super-linear exponent instead of $p_s^*$.
		%\item It will be interesting to study whether the conclusion of Theorem \ref{thm main} holds true if we take a degenerate Kirchhoff function, i.e. if $\mathfrak{m}_0=0$.
\end{remark}
\section{Preliminaries and Weak Formulations}
\noindent In this section we will first recall some properties of the fractional Sobolev spaces. Let $\Omega$ be a bounded domain in $\mathbb{R}^N, N\geq2$ with Lipschitz boundary and define $Q=\mathbb{R}^{2N}\setminus((\mathbb{R}^N\setminus\Omega)\times(\mathbb{R}^N\setminus\Omega))$. Consider the Banach space $(X, \|\cdot\|_X)$ such that
\begin{eqnarray}
	X=\left\{u:\mathbb{R}^N\rightarrow\mathbb{R}\,\,\text{is measurable},\,\, u|_{\Omega}\in L^p(\Omega) \,\,\text{and}\,\,\frac{|u(x)-u(y)|}{|x-y|^{\frac{N+ps}{p}}}\in L^{p}(Q)\right\}
\end{eqnarray}
with respect to the well known Gagliardo norm
\begin{eqnarray}
	\|u\|_X=\|u\|_{L^p(\Omega)}+\left(\int_{Q}\frac{|u(x)-u(y)|^p}{|x-y|^{N+ps}}dxdy\right)^{\frac{1}{p}}.\nonumber
\end{eqnarray}
Let $X_0$ be the subspace of $X$ defined as 
\begin{eqnarray}
	X_0=\left\{u\in X:\,\, u=0 \,\,\text{a.e. in}\,\, \mathbb{R}^N\setminus\Omega\right\}.\nonumber
\end{eqnarray}
Then the space $(X_0, \|\cdot\|)$ is a Banach space \cite{Servadei2013,Servadei2012} with respect to the norm
\begin{eqnarray}
	\|u\|=\left(\int_{Q}\frac{|u(x)-u(y)|^p}{|x-y|^{N+ps}}dxdy\right)^{\frac{1}{p}}.\nonumber
\end{eqnarray}
\noindent The best Sobolev constant is defined as 
\begin{equation}\label{sobolev const}
	S=\underset{u\in X_0\setminus\{0\}}{\inf}\displaystyle\cfrac{\int_{Q}\frac{|u(x)-u(y)|^p}{|x-y|^{N+ps}}dxdy}{\left(\int_\Omega|u|^{p_s^*}dx\right)^{\frac{p}{p_s^*}}}.
\end{equation}
\noindent The next Lemma is due to \cite{Servadei2013,Servadei2012} which states the embeddings of the space $X_0$ into Lebesgue spaces.
\begin{lemma}\label{embedding}
	If $\Omega$ is a bounded domain with a Lipschitz boundary and $N>ps$, then the embedding $X_0 \hookrightarrow L^{q}(\Omega) $ for $q\in[1,p_s^*]$ is continuous and is compact for $q\in[1,p_s^*)$, where $p_s^*=\frac{Np}{N-ps}$ is the critical Sobolev exponent.
\end{lemma}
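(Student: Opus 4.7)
The plan is to reduce the claim to known results about the full fractional Sobolev space $W^{s,p}(\mathbb{R}^N)$ by exploiting the zero-extension property: every $u\in X_0$ can be regarded as an element of $W^{s,p}(\mathbb{R}^N)$ by setting $u\equiv 0$ on $\mathbb{R}^N\setminus\Omega$, and the Gagliardo integrand over $Q$ then coincides with the one over $\mathbb{R}^{2N}$. First I would establish the continuous embedding $X_0\hookrightarrow L^{p_s^*}(\Omega)$ by invoking the fractional Sobolev inequality on $\mathbb{R}^N$: there exists $C=C(N,s,p)>0$ such that for every $u\in W^{s,p}(\mathbb{R}^N)$,
\begin{equation*}
\left(\int_{\mathbb{R}^N}|u|^{p_s^*}\,dx\right)^{p/p_s^*}\leq C\int_{\mathbb{R}^{2N}}\frac{|u(x)-u(y)|^p}{|x-y|^{N+sp}}\,dx\,dy.
\end{equation*}
Applied to the zero-extension of $u\in X_0$, this immediately yields $\|u\|_{L^{p_s^*}(\Omega)}\leq C^{1/p}\|u\|$, which is the required continuity. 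The definition of $S$ in \eqref{sobolev const} is in fact the optimal constant in this inequality.

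For the remaining continuous embeddings $X_0\hookrightarrow L^q(\Omega)$ with $q\in[1,p_s^*)$, I would use H\"older's inequality together with the finite measure of $\Omega$: for any such $q$,
\begin{equation*}
\|u\|_{L^q(\Omega)}\leq |\Omega|^{\frac{1}{q}-\frac{1}{p_s^*}}\|u\|_{L^{p_s^*}(\Omega)}\leq C'\|u\|,
\end{equation*}
which completes the continuous part of the lemma.

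For the compactness statement, my strategy is in two stages. First, establish compactness of $X_0\hookrightarrow L^p(\Omega)$ via the Fr\'echet--Kolmogorov criterion: given a bounded sequence $(u_n)\subset X_0$, I would verify (i) uniform boundedness of the $L^p$-norms (which follows from step one above with $q=p$), and (ii) uniform control of translations, namely $\|u_n(\cdot+h)-u_n\|_{L^p(\Omega)}\to 0$ as $|h|\to 0$ uniformly in $n$. The latter is the technical core and is obtained by estimating $\|u_n(\cdot+h)-u_n\|_{L^p}^p$ against the Gagliardo seminorm through a change of variables and a dyadic decomposition of the annuli $\{|y|\sim 2^k|h|\}$; the smallness comes from the fact that the Gagliardo integrand is integrable at zero precisely because $sp<N$ offsets the $|h|^{sp}$ weight that appears. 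Since functions in $X_0$ vanish outside $\Omega$, tightness at infinity is automatic. Once compactness into $L^p$ is secured, for a general $q\in(p,p_s^*)$ I would interpolate: writing $\frac{1}{q}=\frac{1-\theta}{p}+\frac{\theta}{p_s^*}$ with $\theta\in(0,1)$, H\"older gives
\begin{equation*}
\|u_n-u\|_{L^q(\Omega)}\leq \|u_n-u\|_{L^p(\Omega)}^{1-\theta}\,\|u_n-u\|_{L^{p_s^*}(\Omega)}^{\theta},
\end{equation*}
where the first factor tends to zero along a subsequence by the $L^p$ compactness and the second factor remains bounded by the continuous $L^{p_s^*}$ embedding. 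For $q\in[1,p)$ one reduces to the $L^p$ case by H\"older and the finiteness of $|\Omega|$.

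The main obstacle I anticipate is verifying step (ii) of the Fr\'echet--Kolmogorov criterion in the fractional $p$-setting, since one must exchange a translation estimate with a Gagliardo double integral and track the exponent $sp<N$ carefully to close the bound uniformly in $n$; everything else is essentially interpolation and zero-extension bookkeeping. Since this lemma is a restatement of the embedding theorems established in \cite{Servadei2012,Servadei2013} (which in turn build on the ${W}^{s,p}(\mathbb{R}^N)$ machinery developed in \cite{Nezza2012}), the cleanest exposition would cite those works for the hard translation estimate and only write out the interpolation and H\"older steps in detail.
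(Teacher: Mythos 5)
Your sketch is mathematically sound, but it is worth noting that the paper does not prove this lemma at all: it is quoted as a known embedding result with a pointer to \cite{Servadei2012,Servadei2013}, so there is no internal argument to compare against. What you propose is essentially the standard proof underlying those citations (and, for general $p\neq 2$, really the machinery of \cite{Nezza2012}): zero extension makes the Gagliardo seminorm over $Q$ coincide with the one over $\mathbb{R}^{2N}$ because the integrand vanishes on $(\mathbb{R}^N\setminus\Omega)\times(\mathbb{R}^N\setminus\Omega)$; the fractional Sobolev inequality for compactly supported functions gives the critical continuous embedding, consistent with the definition of $S$ in \eqref{sobolev const}; H\"older on the bounded domain handles $q<p_s^*$; and compactness follows from Fr\'echet--Kolmogorov plus interpolation. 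Two small points of precision. First, the hypothesis $sp<N$ is what makes $p_s^*$ finite and the Sobolev inequality available; the translation estimate $\|u(\cdot+h)-u\|_{L^p}\lesssim |h|^{s}[u]_{s,p}$ behind the Fr\'echet--Kolmogorov step does not itself hinge on $sp<N$, so your attribution of the "smallness" to that inequality is slightly misplaced, though harmless. Second, since you only outline the translation estimate and defer it to the literature, your write-up is, in effect, the same move the paper makes (cite the hard analytic core), just with the reduction, H\"older, and interpolation bookkeeping made explicit — which is a reasonable and correct way to present it.
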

\noindent A function $u\in X_0$ is a weak solution of problem (\ref{main p}), if $\varphi u^{-\gamma}\in L^1(\Omega)$ and
\begin{align}\label{weak p}
\begin{split}
 \mathfrak{M}(\|u\|^p)\int_{Q}\frac{|u(x)-u(y)|^{p-2}(u(x)-u(y))(\varphi(x)-\varphi(y))}{|x-y|^{N+ps}}dxdy &-\int_{\Omega}\frac{\lambda}{|u|^{\gamma-1}u}\varphi dx\\
 &-\int_{\Omega}|u|^{p_s^*-2}u\varphi dx=0
 \end{split}
\end{align}
for each $\varphi\in X_0$. The energy functional $J_\lambda\colon X_0\rightarrow\mathbb{R}$ associated to problem \eqref{main p} is defined as
\begin{align}\label{energy p}
J_{\lambda}(u) = 
\frac{1}{p}\mathcal{M}(\|u\|^p)\|u\|^p-\frac{\lambda}{1-\gamma}\int_{\Omega}|u|^{1-\gamma}dx-\frac{1}{p_s^*}\int_{\Omega}|u|^{p_s^*}dx.
\end{align}
Observe that because of the presence of a singular term, the functional $J_{\lambda}$ is not differentiable in $X_0$. Also, the critical Sobolev nonlinearity causes the lack of compact embedding (refer Lemma \ref{embedding}). Hence we will use a cut-off technique to accomplish our goal. Precisely, we will first construct a $C^1$ functional $I_{\lambda}$ such that the critical points of both $J_{\lambda}$ and $I_{\lambda}$ coincides whenever the norms of the critical points are sufficiently small. Then on applying the concentration compactness principle, we will obtain a certain range of {\it energy} for the functional to satisfy the Palais-Smale condition. Finally, by using the Kajikiya's symmetric mountain pass theorem \cite{Kajikiya2005} the existence of $k$ solutions will be achieved. One can prove the G\^{a}teaux differentiability of $J_{\lambda}$ with a slight modification of cf.\cite[Lemma 6.2]{Saoudi2019}.

Let us now consider the singular Kirchhoff problem
\begin{align}\label{squasinna}
\begin{split}
	\mathfrak{M}\left(\int_{Q}\frac{|u(x)-u(y)|^p}{|x-y|^{N+ps}}dxdy\right)(-\Delta)_{p}^{s} u&=\lambda u^{-\gamma},~\text{in}~\Omega\\
	u&>0,~\text{in}~\Omega\\
	u&=0,~\text{in}~\mathbb{R}^N\setminus\Omega.
	\end{split}
\end{align}
 The existence of a weak solution of \eqref{squasinna} can be obtained as a solution for limits of approximations by considering a sequence of perturbed PDEs. We now have the following Lemma due to \cite{Canino2017}.
\begin{lemma}
	Let $\Omega\subset\mathbb{R}^N$ be a bounded domain with Lipschitz boundary. Then, for $0<\gamma<1$, problem \eqref{squasinna} possesses a unique solution $\underline{u}\in X_0$ with $\underset{K}{\mbox{\upshape ess\,inf}}\,\underline{u}>0$ for any compact $K\subset\subset\Omega$.
\end{lemma}
\noindent Inspired from Giacomoni et al \cite{Giacomoni2007}, we now consider the following cut-off problem
\begin{align*}\label{cut p}	
	\mathfrak{M}\left(\int_{Q}\frac{|u(x)-u(y)|^p}{|x-y|^{N+ps}}dxdy\right)(-\Delta)_{p}^{s} u&=\lambda g(x,u)+|u|^{p^*_s-2}u~\text{in}~\Omega,\\
	u&>0~\text{in}~\Omega,\\
	u&=0~\text{in}~\mathbb{R}^N\setminus\Omega,\tag{$P'$}
\end{align*}
\noindent where the function $g$ is defined as below. Let $\underline{u}$ be the unique solution to \eqref{squasinna}. We define $g:\Omega\times\mathbb{R}^+\to\mathbb{R}$ as
\[   
{g}(x,t) = 
\begin{cases}
|t|^{-\gamma-1}t, &~\text{if}~|t|>\underline{u}(x)\\
(\underline{u}(x))^{-\gamma},&~\text{if}~ 0<|t|\leq \underline{u}(x)
%0, &~\text{if}~t\leq 0
\end{cases}\] 
%and
%\[   
%{f}(x,t) = 
%\begin{cases}
%t^{p_s^*-1}, &~\text{if}~t>\underline{u}(x)\\
%(\underline{u}(x))^{p_s^*-1},&~\text{if}~t\leq \underline{u}(x).
%\end{cases}\]
Let $G(x,u)=\int_{0}^{u}{g}(x,\tau)d\tau$.
%and $F(x,t)=\int_{0}^{t}{f}(x,\alpha)d\alpha$. 
Therefore, the associated cut-off functional $I_\lambda\colon X_0\rightarrow\mathbb{R}$ corresponding to \eqref{cut p} is defined as
\begin{align*}
I_{\lambda}(u) = 
\frac{1}{p}\mathcal{M}(\|u\|^p)-\lambda\int_{\Omega}G(x,u)dx-\frac{1}{p_s^*}\int_{\Omega}|u|^{p_s^*}dx.
\end{align*}
Note that the functional $I_{\lambda}$ is even. Moreover, it can be concluded that $I_\lambda$ is $C^1$. Indeed, the first and the third term of the functional $I_{\lambda}$ are $C^1$. On applying a similar arguments as in \cite[Lemma 6.4]{Saoudi2019} {in combination with Remark \ref{dj1} $(2)$} we get the differentiability of the singular term $G(x,u)$ for nonnegative $u$.
%The functional $J_{\lambda}$ is G\^{a}teaux differentiable.

% We consider the following problem 
%\begin{align}\label{min_prob1}
%\underset{u\in B_r}\inf\{I_{\lambda}(u)\}.
%\end{align}

%, then the set of critical points of $I_{\lambda}$ coincides with that of $J_{\lambda}$, over the set $\{x\in\Omega:|u(x)|>\underline{u}(x)\}$.
We aim to guarantee that the problem \eqref{cut p} possesses arbitrarily (but countably) many nonnegative solutions depending upon the value of $\lambda$.
A function $u\in X_0$ is said to be a weak solution to the problem (\ref{cut p}), if $\varphi u^{-\gamma}\in L^1(\Omega)$ and 
\begin{align*}
\begin{split}
\mathfrak{M}(\|u\|^p)\int_{Q}\frac{|u(x)-u(y)|^{p-2}(u(x)-u(y))(\varphi(x)-\varphi(y))}{|x-y|^{N+ps}}dxdy&-\lambda\int_{\Omega}g(x,u)\varphi dx\\
&-\int_{\Omega}|u|^{p^*-2}u\varphi dx=0
\end{split}
\end{align*}
for each $\varphi\in X_0$.

{We apriori prove that if $u>0$ is a solution to \eqref{cut p} then $u\geq \underline{u}$ a.e. in $\Omega$, so that the critical points of the functionals $I_{\lambda}$ and $J_{\lambda}$ become identical.} Indeed, let $u$ be a positive solution to \eqref{main p} and $A=\{x\in\Omega: u(x)\leq \underline{u}(x)\}$. We first consider the difference of the problems \eqref{main p} and \eqref{squasinna}, and test it with $\phi=(u-\underline{u})_-$. We then use the fact that the Kirchhoff operator is monotone in nature (proof follows from Bensedik \cite{ahmed1})
\begin{align}\label{dj1}
\begin{split}
0\geq \langle \mathfrak{M}(\|u\|^p)(-\Delta)_p^su-\mathfrak{M}(\|\underline{u}\|^p)(-\Delta)_p^s\underline{u},(u-\underline{u})_-\rangle=&\lambda\int_{\Omega}(u^{-\gamma}-\underline{u}^{-\gamma})(u-\underline{u})_-dx\\
&+\int_{\Omega}u^{p_s^*-1}(u-\underline{u})_-dx\geq 0.
\end{split}
\end{align}
Thus $|A|=0$. {Therefore, $u\geq \underline{u}$ a.e. in $\Omega$.}

	\begin{lemma}\label{u_greater_u_lambda}
		Let $\lambda$ be sufficiently small. Then any positive solution of \eqref{main p}, say $u$, is such that $u\geq \underline{u}$, then $u>\underline{u}$ a.e. in $\Omega$.
	\end{lemma}
	\begin{proof}
		Apparently $\underline{u}$ is a subsolution to the positive solutions of \eqref{main p} indicating that $u\geq \underline{u}$ a.e. in $\Omega$. Let $B=\{x\in\Omega:u(x)=\underline{u}(x)\}$. This set $B$ is measurable and therefore for any $\delta>0$ there exists a closed subset $F$ of $B$ such that $|B\setminus F|<\delta$. Also, let $|B|>0$. Define a test function $\varphi\in C_c^1(\Omega)$ such that 
		\begin{equation}\varphi(x)=\begin{cases}
		1,& ~\text{if}~ x\in F\\
		0<\varphi<1,&~\text{if}~x\in B\setminus F\\
		0,&~\text{if}~x\in \Omega\setminus B.
		\end{cases}\end{equation}
		Since $u$ is a weak solution to \eqref{main p}, we have 
		\begin{align}\label{equality_breakage}
		\begin{split}
		0=&  \langle \mathfrak{M}(\|u\|^p)(-\Delta)_p^su,\varphi\rangle-\lambda\int_{F}u^{-\gamma}dx-\lambda\int_{B\setminus F}u^{-\gamma}\varphi dx-\int_{F}u^{p_s^*-1}dx-\int_{B\setminus F}u^{p_s^*-1}\varphi dx\\
		=&-\int_{F}u^{p_s^*-1}dx-\int_{B\setminus F}u^{p_s^*-1}\varphi dx<0
		\end{split}
		\end{align}
		which is absurd. So, $|B|=0$. Hence, $u>\underline{u}$ a.e. in $\Omega$.
	\end{proof}
	\noindent Therefore we choose $\underline{u}:=u_0$ in the cut-off functional.

From this, the weak solutions of problem \eqref{cut p} are precisely the critical points of the energy functional $I_{\lambda}$. We note that the critical points of $I_{\lambda}$ are also weak solutions to the problem (\ref{cut p}).

\section{Sufficient energy range for the Palais-Smale Condition.}
\noindent The aim of this section is to see whether the functional $I_{\lambda}$ satisfies the Palais-Smale $(PS)_c$ condition up to some finite energy level $c$ or not. We also target to apply the symmetric mountain pass theorem to the functional $I_{\lambda}$. Observe that due to the presence of the critical exponent, the functional $I_{\lambda}$ is not bounded from below. Moreover, due to the lack of a compact embedding of $X_0$ in $L^{p^*_s}(\Omega)$, one cannot verify the $(PS)_c$ condition instantly. However, we will look for an energy range $(-\infty,c_*)$ such that the $(PS)_c$ condition holds true. For this, we first state the $(PS)_c$ condition for $I_{\lambda}$.
\begin{definition}[$(PS)_c$ condition for $I_{\lambda}$] Let $c\in \mathbb{R}$ and let $X_0^{*}$ be the dual of $X_0$. Then $\{u_{n}\}\subset X_0$ is said to be a $(PS)_c$ sequence for $I_{\lambda}$ if it verifies
\begin{align}\label{ps 1}
	I_{\lambda}(u_{n})\rightarrow c\quad\text{and}\quad\langle I_{\lambda}^{'}(u_{n}), v\rangle\rightarrow0,\quad \text{for every } v\in X_0.
\end{align}
Moreover, $I_{\lambda}$ satisfies the $(PS)_{c}$ condition if every $(PS)_{c}$ sequence for $I_{\lambda}$ possesses a convergent subsequence.
\end{definition}

Now, let us set
\begin{align}\label{cond1}
c_*:=&\min\left\{\left(\frac{1}{p\theta}-\frac{1}{p_s^*}\right)(\kappa S)^{p_s^*/(p_s^*-p)},\left(\frac{1}{p\theta}-\frac{1}{p_s^*}\right)(m_0^{1/\theta}S)^{\frac{p_s^*\theta}{p_s^*-p\theta}}\right\}\nonumber\\
&-\left(\frac{1}{p\theta}-\frac{1}{p_s^*}\right)^{-\frac{1-\gamma}{p_s^*-1+\gamma}}\left[\lambda \left(\frac{1}{1-\gamma}-\frac{1}{p\theta}\right)C(\Omega)\right] ^{\frac{p_s^*}{p_s^*-1+\gamma}}\\
&-\lambda\left(\frac{1}{1-\gamma}-\frac{1}{p\theta}\right)\int_{\Omega}(\underline{u})^{1-\gamma},
\end{align}	
where $\kappa=\kappa(1)$ and $\mathfrak{m}_0$ are the constants from the conditions $(\mathfrak{m}_2)$ and $(\mathfrak{m}_3)$, $\underline{u}$ is the unique solution of \eqref{squasinna} and we denote
$$C(\Omega)=|\Omega|^{(p_s^*-1+\gamma)/p_s^*}S^{-(1-\gamma)/p}$$
with $S$ given in \eqref{sobolev const}.

\begin{lemma}\label{ps limit}
	Let the assumptions $(\mathfrak{m}_{1})$-$(\mathfrak{m}_{2})$ hold true and let $\lambda>0$. Then, the functional $I_{\lambda}$ satisfies $(PS)_c$-condition for any $c<c_*$, with $c_*$ given in \eqref{cond1}.
%	for every 
%	\begin{align*}c<c_*:=&\min\left\{\left(\frac{1}{p\theta}-\frac{1}{p_s^*}\right)(\kappa S)^{p_s^*/(p_s^*-p)},\left(\frac{1}{p\theta}-\frac{1}{p_s^*}\right)(m_0^{1/\theta}S)^{\frac{p_s^*\theta}{p_s^*-p\theta}}\right\}\\
%	&-\left(\frac{1}{p\theta}-\frac{1}{p_s^*}\right)^{-\frac{1-\gamma}{p_s^*-1+\gamma}}\left[\lambda \left(\frac{1}{1-\gamma}-\frac{1}{p\theta}\right)C(\Omega)\right] ^{\frac{p_s^*}{p_s^*-1+\gamma}}.\end{align*}
\end{lemma}
\begin{proof}
Let $\lambda>0$ and let $\{u_n\}\subset X_0$ be a sequence satisfying \eqref{ps 1}.
Without loss of generality, we shall assume that $\{u_n\}$ is such that $u_n$ is nonzero in $X_0$ for every $n\in\mathbb{N}$. 
However, due to the degenerate nature of Kirchhoff coefficient $\mathfrak{M}$, we distinguish two situations:
either \mbox{$\displaystyle\inf_{n\in\mathbb{N}}\|u_n\|=d>0$} or $\displaystyle\inf_{n\in\mathbb{N}}\|u_n\|=0$.\medskip

\noindent $\bullet$\,{\em Case} $\displaystyle\inf_{n\in\mathbb N}\|u_n\|=d>0$.
We first show that $\{u_n\}$ is bounded.
By $(\mathfrak{m}_{3})$, with $\tau=d^p$, there exists $\kappa=\kappa(d^p)>0$ such that
\begin{equation}\label{e2.2}
\mathfrak{M}(\|u_n\|^p)\geq \kappa\quad\mbox{for any }n\in\mathbb{N}.
\end{equation}
From this, together with $(\mathfrak{m}_{1})$ we get
\begin{align}\label{ps bdd}
\begin{split}
c+o(\| u_{n}\|)&=I_{\lambda}(u_{n})-\frac{1}{p_s^*}\langle I_{\lambda}^{'}(u_{n}),u_{n}\rangle\\
\geq &\frac{1}{p}\mathcal{M}(\|u_{n}\|^{p})-\frac{1}{p_s^*}\mathfrak{M}(\|u_{n}\|^{p})\|u_{n}\|^{p}
-\lambda\int_{\Omega}\left[G(x,u_n)-\frac{g(x,u_n)u_n}{p_s^*}\right]dx\\
\geq &\left(\frac{1}{p\theta}-\frac{1}{p_s^*}\right)\mathfrak{M}(\|u_{n}\|^{p})\|u_{n}\|^{p}-\lambda\left(\frac{1}{1-\gamma}-\frac{1}{p_s^*}\right)\int_{\Omega_1}(u_n^+)^{1-\gamma}\\
&-\lambda\left(\frac{1}{1-\gamma}-\frac{1}{p_s^*}\right)\int_{\Omega_2}\underline{u}^{1-\gamma}\\
\geq &\left(\frac{1}{p\theta}-\frac{1}{p_s^*}\right)\kappa\|u_{n}\|^{p}-\lambda\left(\frac{1}{1-\gamma}-\frac{1}{p_s^*}\right)C(\Omega)\|u_n\|^{1-\gamma}\\
&-\lambda\left(\frac{1}{1-\gamma}-\frac{1}{p_s^*}\right)\int_{\Omega_2}\underline{u}^{1-\gamma},
\end{split}
\end{align}
where
$$\Omega_1=\{x\in\Omega:u_n(x)>\underline{u}(x)\},\qquad\Omega_2=\{x\in\Omega:u_n(x)\leq\underline{u}(x)\}.$$
It is clear from the last inequality \eqref{ps bdd} that the sequence $\{u_{n}\}$ is bounded in $X_0$. Since the space $X_0$ is reflexive then there exists a subsequence of $\{u_{n}\}$ (still denoted by $\{u_{n}\}$) and $u\in X_0$ such that
\begin{equation}\label{convergence_result1}
\begin{array}{ll}
u_{n}\rightharpoonup u ~\text{ in}~ X_0,& \|u_n\|\to\mu\\
u_{n}\rightharpoonup u ~\text{ in}~ L^{p_s^*}(\Omega),&\|u_n-u\|_{p_s^*}\to l\\
u_{n}\rightarrow u ~\text{ in}~ L^{r}(\Omega) ~\text{for}~ 1\leq r<p_{s}^{*},&u_{n}(x)\rightarrow u(x) ~\text{a.e. in}~ \Omega
\end{array}
\end{equation}
as $n\to\infty$.
Furthermore, the sequence $\{\mathcal U_n\}$,
defined in $\mathbb R^{2N}\setminus\mbox{Diag\,}(\mathbb R^{2N})$ by
\begin{equation*}
(x,y)\mapsto\mathcal U_n(x,y)=\frac{|u_n(x)-u_n(y)|^{p-2}[u_n(x)-u_n(y)]}{|x-y|^{(N+ps)/p^{\prime}}},
\end{equation*}
is bounded in $L^{p^{\prime}}(\mathbb R^{2N})$ as well as $\mathcal U_n(x,y)\to\mathcal U(x,y)$ a.e. in $\mathbb R^{2N}$, where
\begin{equation*}
\mathcal U(x,y)
=\frac{|u(x)-u(y)|^{p-2}
[u(x)-u(y)]}{|x-y|^{(N+ps)/p^{\prime}}}.
\end{equation*}
Thus, going if necessary to a further subsequence, we get that $\mathcal U_n\rightharpoonup
\mathcal U$ in $L^{p^{\prime}}(\mathbb R^{2N})$ as $n\to\infty$ and so
\begin{equation}\label{x1}
\begin{aligned}
\lim_{n\to\infty}&\int_{Q}\frac{|u_n(x)-u_n(y)|^{p-2}
(u_n(x)-u_n(y))(\varphi(x)-\varphi(y))}{|x-y|^{N+ps}}dxdy\\
&=\int_{Q}\frac{|u(x)-u(y)|^{p-2}
(u(x)-u(y))(\varphi(x)-\varphi(y))}{|x-y|^{N+ps}}dxdy
\end{aligned}
\end{equation}
for any $\varphi\in X_0(\Omega)$, since
$$|\varphi(x)-\varphi(y)|\cdot|x-y|^{-(N+ps)/p}\in L^p(\mathbb R^{2N}).$$
While, by Vitali's convergence theorem we have
\begin{align}\label{conv sing comp}
\lim\limits_{n\rightarrow\infty}\int_{\Omega}g(x,u_n)u_n dx=\int_{\Omega}g(x,u)u dx.
\end{align}
It is not difficult to see that
\begin{align}\label{conv sing}
\lim\limits_{n\rightarrow\infty}\int_{\Omega}g(x,u_n)\varphi dx=\int_{\Omega}g(x,u)\varphi dx
\end{align}
for any $\varphi\in X_0$.

\vspace{0.2cm}
\noindent
{\bf{Proof of the claim:}} Let $\varphi\in X_0$. To prove the claim we first show that
\begin{align*}
\lim\limits_{n\rightarrow\infty}\int_{\Omega}\varphi g(x,u_n(x))dx=\int_{\Omega}\varphi g(x,u(x))dx.
\end{align*}
Let us define $S:=\{x:|u(x)|>\underline{u}(x)\}$, $T:=\{x:0<|u(x)|\leq\underline{u}(x)\}$. Since $u_n(x)\to u(x)$, hence for a sufficiently small $\epsilon>0$, $\underline{u}(x)<|u(x)|-\epsilon<|u_n(x)|$ for all $x$ except on a set of arbitrarily small measure, for all $n$. Thus we have 
\begin{align}\label{conv1}
\begin{split}
\lim\limits_{n\rightarrow\infty}\int_{\Omega}\varphi g(x,u_n(x))dx=&\lim\limits_{n\rightarrow\infty}\left(\int_{S}\varphi |u_n|^{-\gamma-1}u_ndx+\int_{T}\varphi \underline{u}^{-\gamma}dx\right)\\
=&\int_{S}\varphi |u|^{-\gamma-1}udx+\int_{T}\varphi \underline{u}^{-\gamma}dx\\
=& \int_{\Omega}g(x,u)\varphi dx.
\end{split}
\end{align}
\\\\
Consequently, by \eqref{ps 1} and \eqref{convergence_result1}-\eqref{conv sing} we get
\begin{align}\label{af1}
o(1)=&\langle I_{\lambda}'(u_n),u_n-u\rangle\nonumber\\
=&\mathfrak{M}(\|u_n\|^p)\int_{Q}\frac{|u_n(x)-u_n(y)|^{p-2}(u_n(x)-u_n(y))
[(u_n-u)(x)-(u_n-u)(y)]}{|x-y|^{N+ps}}dxdy\nonumber\\
&-\lambda\int_{\Omega}g(x,u_n)(u_n-u)dx-\int_{\Omega}|u_n|^{p_s^*-2}u_n(u_n-u)dx\\
=&\mathfrak{M}(\mu^p)(\mu^p-\|u\|^p)-\|u_n\|_{p_s^*}^{p_s^*}+\|u\|_{p_s^*}^{p_s^*}+o(1)\nonumber\\
=&\mathfrak{M}(\mu^p)(\|u_n-u\|^p)-\|u_n-u\|_{p_s^*}^{p_s^*}+o(1),\nonumber
\end{align}
as $n\to\infty$, where in the last step we used the classical Br\'ezis-Lieb lemma.
Thus we deduce the formula
\begin{align}\label{af2}
\mathfrak{M}(\mu^p)\underset{n\to\infty}\lim\|u_n-u\|^p=&\underset{n\to\infty}\lim\|u_n-u\|_{p_s^*}^{p_s^*}.
\end{align}
By \eqref{convergence_result1} and \eqref{af2} we obtain
\begin{align}\label{af3}
l^{p_s^*}\geq & S\mathfrak{M}(\mu^p)l^p.
\end{align}
When $l=0$ and since $\mu>0$ and $\mathfrak{M}$ admits a unique zero at $0$, hence $u_n\to u$ in $X_0$ thereby proving the result. 

If $l>0$, then by using \textcolor{red}{\eqref{af1}} we get
\begin{align}\label{af4}
\mathfrak{M}(\mu^p)(\mu^p-\|u_n\|^p)=& l^{p_s^*}.
\end{align}
Therefore we have
\begin{align}\label{af5}
(l^{p_s^*})^{ps/N}=&\mathfrak{M}(\mu^p)^{ps/N}(\mu^p-\|u\|^p)^{ps/N}\geq S\mathfrak{M}(\mu^p).
\end{align}
Since the exact behaviour of $\mathfrak{M}$ is unknown, hence we must consider two situations.

\medskip

\noindent $\bullet$\,{\em Subcase}  $\mu\in(0,1)$.~ By \eqref{af5} and $(\mathfrak{m_2})$ we get
\begin{align}\label{af6}
\mu^{p^2s/N}\geq(\mu^p-\|u\|^p)^{ps/N}\geq S\mathfrak{M}(\mu^p)^{p/p_s^*}\geq S\mathfrak{m}_0^{p/p_s^*}\mu^{\frac{p^2(\theta-1)}{p_s^*}}.
\end{align}
Furthermore, since $N<ps\theta/(\theta-1)=ps\theta'$, we have
\begin{align}\label{af7}
\mu^p&\geq \left(\textcolor{red}{S}\mathfrak{m}_0^{p/p_s^*}\right)^{\frac{N}{ps\theta-N(\theta-1)}}.
\end{align}
Of course the restriction $N/(p\theta')<s$ follows from the fact $1<\theta<\frac{p_s^*}{p}$. By using $(\mathfrak{m}_2)$, \eqref{af5}, \eqref{af7} we get
\begin{align}\label{af8}
l^{p_s^*}\geq(S\mathfrak{M}(\mu^p))^{N/ps}\geq(S\mathfrak{m}_0\mu^{p(\theta-1)})^{N/ps}\geq(\mathfrak{m}_0^{1/\theta}S)^{\frac{N\theta}{ps\theta-N(\theta-1)}}.
\end{align}
Arguing similarly to \eqref{ps bdd}, by$(\mathfrak{m}_{1})$ we get 
\begin{align}\label{af9}
I_{\lambda}(u_n)-\frac{1}{p\theta}\langle I_{\lambda}'(u_n),u_n\rangle\geq &\frac{1}{p}\mathcal{M}(\|u_n\|^p)-\frac{1}{p\theta}\mathfrak{M}(\|u_n\|^p)\|u_n\|^p\nonumber\\
&-\lambda\left(\frac{1}{1-\gamma}-\frac{1}{p\theta}\right)\int_{\Omega_1}|u_n|^{1-\gamma}dx+\left(\frac{1}{p\theta}-\frac{1}{p_s^*}\right)\|u_n\|_{p_s^*}^{p_s^*}\\
&-\lambda\left(\frac{1}{1-\gamma}-\frac{1}{p\theta}\right)\int_{\Omega_2}\underline{u}^{1-\gamma}\nonumber\\
\geq & \left(\frac{1}{p\theta}-\frac{1}{p_s^*}\right)\|u_n\|_{p_s^*}^{p_s^*}-\lambda\left(\frac{1}{1-\gamma}-\frac{1}{p\theta}\right)\int_{\Omega_1}|u_n|^{1-\gamma}dx\nonumber\\
&-\lambda\left(\frac{1}{1-\gamma}-\frac{1}{p\theta}\right)\int_{\Omega_2}\underline{u}^{1-\gamma}.\nonumber
\end{align}
On passing the limit $n\to\infty$ and by using \eqref{ps 1}, \eqref{convergence_result1}, the Br\'ezis-Lieb lemma, H\"{o}lder's inequality and Young's inequality (to the second term) we obtain
\begin{align}\label{af10}
\begin{split}
c\geq &\left(\frac{1}{p\theta}-\frac{1}{p_s^*}\right)(l^{p_s^*}+\|u\|_{p_s^*}^{p_s^*})-\lambda\left(\frac{1}{1-\gamma}-\frac{1}{p\theta}\right)\int_{\Omega_1}(u^+)^{1-\gamma}dx\\
&-\lambda\left(\frac{1}{1-\gamma}-\frac{1}{p\theta}\right)\int_{\Omega}\underline{u}^{1-\gamma}\\
\geq & \left(\frac{1}{p\theta}-\frac{1}{p_s^*}\right)(l^{p_s^*}+\|u\|_{p_s^*}^{p_s^*})-\lambda C(\Omega)\left(\frac{1}{1-\gamma}-\frac{1}{p\theta}\right)\|u\|_{p_s^*}^{1-\gamma}dx\\
&-\lambda\left(\frac{1}{1-\gamma}-\frac{1}{p\theta}\right)\int_{\Omega}\underline{u}^{1-\gamma}\\
\geq &\left(\frac{1}{p\theta}-\frac{1}{p_s^*}\right)(l^{p_s^*}+\|u\|_{p_s^*}^{p_s^*})-\left(\frac{1}{p\theta}-\frac{1}{p_s^*}\right)\|u\|_{p_s^*}^{p_s^*}\\
&-\left(\frac{1}{p\theta}-\frac{1}{p_s^*}\right)^{-\frac{1-\gamma}{p_s^*-1+\gamma}}\left[\lambda \left(\frac{1}{1-\gamma}-\frac{1}{p\theta}\right)C(\Omega)\right] ^{\frac{p_s^*}{p_s^*-1+\gamma}}\\
&-\lambda\left(\frac{1}{1-\gamma}-\frac{1}{p\theta}\right)\int_{\Omega}\underline{u}^{1-\gamma}.
\end{split}
\end{align}
Finally by \eqref{af8} we obtain 
\begin{align*}
c\geq \left(\frac{1}{p\theta}-\frac{1}{p_s^*}\right)(\mathfrak{m}_0^{1/\theta}S)^{p_s^*/(p_s^*-p\theta)}
&-\left(\frac{1}{p\theta}-\frac{1}{p_s^*}\right)^{-\frac{1-\gamma}{p_s^*-1+\gamma}}\left[\lambda \left(\frac{1}{1-\gamma}-\frac{1}{p\theta}\right)C(\Omega)\right] ^{\frac{p_s^*}{p_s^*-1+\gamma}}\\
&-\lambda\left(\frac{1}{1-\gamma}-\frac{1}{p\theta}\right)\int_{\Omega}\underline{u}^{1-\gamma}
\end{align*}
which contradicts $c<c_*$ and \eqref{cond1}. Hence $l=0$.

\medskip

\noindent $\bullet$\,{\em Subcase}  $\mu>1$.~ By using \eqref{af5} and $(\mathfrak{m}_3)$ with $\tau=1$, we get
 \begin{align}\label{af12}
 l^{p_s^*}\geq (\kappa S)^{N/ps},
 \end{align}
with $\kappa=\kappa(1)>0$. Therefore, by \eqref{af10} we obtain
\begin{align*}
c\geq \left(\frac{1}{p\theta}-\frac{1}{p_s^*}\right)(\kappa S)^{p_s^*/(p_s^*-p)}
&-\left(\frac{1}{p\theta}-\frac{1}{p_s^*}\right)^{-\frac{1-\gamma}{p_s^*-1+\gamma}}\left[\lambda \left(\frac{1}{1-\gamma}-\frac{1}{p\theta}\right)C(\Omega)\right] ^{\frac{p_s^*}{p_s^*-1+\gamma}}\\
&-\lambda\left(\frac{1}{1-\gamma}-\frac{1}{p\theta}\right)\int_{\Omega}\underline{u}^{1-\gamma}.
\end{align*}
This again leads to a contradiction. Hence the proof.\medskip

\noindent $\bullet$\,{\em Case} $\displaystyle\inf_{n\in\mathbb N}\|u_n\|=0$.~ This means, either $0$ is an accumulation point for real sequence $\{\|u_n\|\}$ and hence there exists a subsequence of $\{u_n\}$ such that it strongly goes to $0$. 

This concludes the proof of the lemma.

\end{proof}

\section{Auxiliary Results and Proof of Main Theorem. }
\noindent In this section, we will establish the existence of arbitrarily many solutions to the problem \eqref{cut p}. Prior to that let us define some useful tools to be used to guarantee the existence of solutions.
\begin{definition}[\bf{Genus}\cite{Rabinowitz1986}]\label{genus}
	Let $X$ be a Banach space and $A\subset X$. A set $A$ is said to be symmetric if $u\in A$ implies $(-u)\in A$. Let $A$ be a close, symmetric subset of $X$ such that $0\notin A$. We define a genus $\sigma(A)$ of $A$ by the smallest integer $k$ such that there exists an odd continuous mapping from $A$ to $\mathbb{R}^{k}\setminus\{0\}$. We define $\sigma(A)=\infty$, if no such $k$ exists.
\end{definition}
\noindent The next Proposition is due to \cite{Rabinowitz1986} pertaining to some properties of genus.
Let $\Gamma$ denotes the family of all closed subsets of $X\setminus\{0\}$ which are symmetric with respect to the origin.
\begin{lemma}\label{lemma genus}
	Let $A, B\in\Gamma$. Then
	\begin{enumerate}
		\item $A\subset B\Rightarrow\sigma(A)\leq\sigma(B)$.
		\item Suppose  $A$ and $B$ are homeomorphic via an odd map, then $\sigma(A)=\sigma(B)$.
		\item $\sigma(\mathbb S^{N-1})=N$, where $\mathbb S^{N-1}$ is the sphere in $\mathbb{R}^{N}$.
		\item $\sigma(A\cup B)\leq \sigma(A)+\sigma(B)$.
		\item $\sigma(A)<\infty\Rightarrow\sigma(A\setminus B)\geq\sigma(A)-\sigma(B)$.		
		\item For every compact subset $A$ of $X$, $\sigma(A)<\infty$ and there exists $\delta>0$ such that $\sigma(A)=\sigma(N_{\delta}(A))$ where $N_{\delta}(A)=\{x\in X:d(x,A)\leq\delta\}.$		
		\item Suppose $Y\subset X$ is a subspace of $X$ such that $codim(Y)=k$ and $\sigma(A)>k$, then $A\cap X_{0}\neq\emptyset.$
	\end{enumerate}	
\end{lemma}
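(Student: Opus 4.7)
The plan is to verify each of the seven properties directly from the definition of genus, with the Borsuk--Ulam theorem providing the only nontrivial external input (for property 3). Properties 1 and 2 are immediate consequences of the definition. For 1, if $\phi\colon B\to\mathbb{R}^k\setminus\{0\}$ is an odd continuous witness for $\gamma(B)\leq k$, then its restriction $\phi|_A$ witnesses $\gamma(A)\leq k$. For 2, composing such a $\phi$ with an odd homeomorphism $h\colon A\to B$ gives $\gamma(A)\leq\gamma(B)$, and applying the same reasoning with $h^{-1}$ yields the reverse inequality.

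For property 3, the inclusion $S^{N-1}\hookrightarrow\mathbb{R}^N\setminus\{0\}$ is odd and continuous, so $\gamma(S^{N-1})\leq N$. For the lower bound, suppose for contradiction that there is an odd continuous $\phi\colon S^{N-1}\to\mathbb{R}^{N-1}\setminus\{0\}$. Normalizing yields an odd continuous map $S^{N-1}\to S^{N-2}$, contradicting the classical Borsuk--Ulam theorem. This is the deepest step of the proof, and I expect it to be the main obstacle requiring careful justification.

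Property 4 is the workhorse. Choose odd continuous $\phi_A\colon A\to\mathbb{R}^m\setminus\{0\}$ and $\phi_B\colon B\to\mathbb{R}^n\setminus\{0\}$ realizing $\gamma(A)=m$ and $\gamma(B)=n$. By the Tietze extension theorem, applied componentwise and symmetrized via $x\mapsto\tfrac12(\tilde\phi(x)-\tilde\phi(-x))$, extend to odd continuous maps $\tilde\phi_A,\tilde\phi_B$ on all of $X$. The pair $(\tilde\phi_A,\tilde\phi_B)\colon A\cup B\to\mathbb{R}^{m+n}$ is odd and continuous, and on $A\cup B$ at least one block is nonzero, placing the image in $\mathbb{R}^{m+n}\setminus\{0\}$; hence $\gamma(A\cup B)\leq m+n$. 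Property 5 follows immediately: since $A\subset(A\setminus B)\cup B$ (with the usual closure convention built into $\Gamma$), properties 1 and 4 give $\gamma(A)\leq\gamma(A\setminus B)+\gamma(B)$, so $\gamma(A\setminus B)\geq\gamma(A)-\gamma(B)$.

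For property 6, since $A$ is compact with $0\notin A$, there is $r>0$ with $|x|\geq r$ for all $x\in A$. Cover $A$ by a finite family of small open symmetric pairs $B(x_i,r_i)\cup B(-x_i,r_i)$, chosen so that each pair splits into two disjoint components and hence has genus one; property 4 then gives $\gamma(A)<\infty$. For the neighborhood claim, a witnessing odd map $\phi\colon A\to\mathbb{R}^{\gamma(A)}\setminus\{0\}$, extended by Tietze and odd-symmetrized, remains nonvanishing on some symmetric $\delta$-neighborhood $N_\delta(A)$ by compactness and continuity; thus $\gamma(N_\delta(A))\leq\gamma(A)$, and the reverse inequality follows from monotonicity. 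Finally for property 7, decompose $X=Y\oplus Z$ with $\dim Z=k$, and let $P\colon X\to Z\cong\mathbb{R}^k$ be the continuous linear (hence odd) projection with $\ker P=Y$. If $A\cap Y=\emptyset$, then $P|_A$ is an odd continuous map into $\mathbb{R}^k\setminus\{0\}$, forcing $\gamma(A)\leq k$ and contradicting the hypothesis $\gamma(A)>k$. The recurring subtlety throughout is preserving oddness under extension and composition; once this is handled via the standard symmetrization trick, the remaining verifications are routine, and the argument reproduces the standard one from Rabinowitz.
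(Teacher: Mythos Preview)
Your sketch is correct and follows exactly the standard route from Rabinowitz~\cite{Rabinowitz1986}, which is precisely what the paper does: it states the lemma without proof and attributes it to that reference. Since the paper offers no independent argument, there is nothing to compare beyond noting that your outline faithfully reproduces the cited source (including the Borsuk--Ulam input for item~3 and the Tietze-plus-symmetrization device for items~4 and~6).
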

\noindent We will use the following version of the symmetric Mountain Pass Theorem due to Kajikiya \cite{Kajikiya2005}.
\begin{theorem}\label{sym mountain}
	Let $X$ be an infinite dimensional Banach space and $I\in C^1(X,\mathbb{R})$ satisfies the following
	\begin{itemize}
		\item[(i)] $I$ is even, bounded below, $I(0)=0$ and $I$ satisfies the $(PS)_c$-condition for $c>0$.
		\item[(ii)] For each $n\in\mathbb{N}$, there exists an $A_n\in\Gamma_n$ such that $\sup\limits_{u\in A_n}I(u)<0.$ 
	\end{itemize}
	Then either $(1)$ or $(2)$ below holds.
	\begin{itemize}
		\item[(1)] There exists a sequence $\{u_n\}$ such that $I'(u_n)=0$, $I(u_n)<0$ and $u_n\longrightarrow0$ in $X$.
		\item[(2)] There exist two sequences $\{u_n\}$ and $\{v_n\}$ such that $I'(u_n)=0$, $I(u_n)=0$, $u_n\neq0$, $\lim\limits_{n\rightarrow\infty}u_n=0$; $I'(v_n)=0$, $I(v_n)<0$, $\lim\limits_{n\rightarrow\infty}u_n=0$ and $\{v_n\}$ converges to a non-zero limit.
	\end{itemize}
\end{theorem}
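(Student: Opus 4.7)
The plan is to apply Ljusternik--Schnirelmann minimax theory indexed by the Krasnoselskii genus. First, I would introduce the minimax levels
\[
c_n \;=\; \inf_{A \in \Gamma_n}\;\sup_{u \in A} I(u),
\]
where $\Gamma_n = \{A \subset X \setminus \{0\} : A \text{ is closed, symmetric, with } \gamma(A) \geq n\}$. Hypothesis (ii) furnishes $A_n \in \Gamma_n$ with $\sup_{A_n} I < 0$, so each $\Gamma_n$ is non-empty and $c_n \leq \sup_{A_n} I < 0$. Combined with the boundedness from below in (i), this yields $-\infty < c_n < 0$. The inclusion $\Gamma_{n+1} \subset \Gamma_n$ forces $c_n \leq c_{n+1}$, so the sequence $\{c_n\}$ converges to some $c^{*} \leq 0$.

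Second, I would establish the Ljusternik--Schnirelmann multiplicity principle: if $c = c_n = c_{n+1} = \cdots = c_{n+k-1}$, then the critical set $K_c := \{u \in X : I(u) = c,\; I'(u) = 0\}$ satisfies $\gamma(K_c) \geq k$. The proof is by contradiction: assume $\gamma(K_c) < k$; by Lemma \ref{lemma genus}(6), there is a symmetric open neighbourhood $N_{\delta}(K_c)$ with the same genus. The equivariant deformation lemma, applicable because $I$ is $C^1$, even, bounded below, and satisfies $(PS)_c$, then yields an odd continuous homotopy pushing $\{I \leq c + \varepsilon\} \setminus N_{\delta}(K_c)$ into $\{I \leq c - \varepsilon\}$. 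For any near-optimal $A \in \Gamma_{n+k-1}$, parts (1), (4), (5) of Lemma \ref{lemma genus} show that the deformed set still belongs to $\Gamma_n$ but lies below the level $c_n$, which is a contradiction. In particular every $c_n$ is a critical value of $I$.

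Third, I would extract the critical sequences. If infinitely many of the $c_n$ are distinct, pick $u_n \in K_{c_n}$, giving $I'(u_n) = 0$ and $I(u_n) = c_n < 0$. Otherwise some common value $c$ equals $c_n$ for infinitely many $n$, whence the multiplicity principle forces $\gamma(K_c) = \infty$; then $K_c$ contains a countably infinite symmetric collection of critical points at the level $c$, from which the desired sequence is drawn.

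The main obstacle is pinning down the convergence to $0$ of the extracted critical points and correctly sorting the alternatives (1) versus (2). Since $I(u_n) = c_n \to c^{*}$ and $I'(u_n) = 0$, the $(PS)_{c^{*}}$ condition produces a subsequential limit $u_{\infty}$ with $I(u_{\infty}) = c^{*}$ and $I'(u_{\infty}) = 0$. If $c^{*} < 0$, then $u_{\infty} \neq 0$ because $I(0) = 0$, and one iterates the minimax construction on the sublevel $\{I < c^{*}\}$ to produce the companion sequence $\{v_n\}$ of alternative (2). If $c^{*} = 0$ and every subsequential limit of $\{u_n\}$ vanishes, the uniform continuity of $I$ on bounded sets together with $c_n \to 0$ forces $u_n \to 0$, giving alternative (1). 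If instead some subsequential limit at level $0$ is non-zero, that non-zero critical point populates the $\{u_n\}$ family of alternative (2) while the negative-level critical points give the $\{v_n\}$ family. The delicate book-keeping that separates these alternatives, and in particular the argument that in alternative (1) the whole sequence (not merely a subsequence) goes to zero in norm, is the technical heart of Kajikiya's original proof and is where I would expect to invest the most care.
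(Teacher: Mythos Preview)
The paper does not prove this theorem at all: it is quoted verbatim from Kajikiya~\cite{Kajikiya2005} and used as a black box. There is therefore no ``paper's own proof'' against which to compare your proposal. What the paper \emph{does} carry out (Lemmas~\ref{lemma smpt}--\ref{lemma cn}) is the verification of hypotheses (i)--(ii) for the specific truncated functional $\overline{I}_{\lambda}$, together with an ad~hoc argument that $c_n\to 0$ in that concrete setting; but this is an \emph{application} of Theorem~\ref{sym mountain}, not a proof of it.

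That said, your outline is the correct skeleton of Kajikiya's original argument: genus-indexed minimax values $c_n$, the Ljusternik--Schnirelmann multiplicity principle via the equivariant deformation lemma, and then a case analysis on whether $c^{*}=\lim c_n$ is zero or strictly negative. You are right to flag the dichotomy (1) vs.\ (2) as the delicate part. One point to sharpen: in your treatment of the case $c^{*}<0$, you say you would ``iterate the minimax construction on the sublevel $\{I<c^{*}\}$'' to manufacture the second sequence; this is not quite how Kajikiya proceeds, and in fact alternative~(2) in the statement does \emph{not} require $c^{*}<0$---the sequence $\{v_n\}$ with $I(v_n)<0$ can simply be taken from the $K_{c_n}$ already produced, while the nontrivial sequence $\{u_n\}$ at level~$0$ arises from accumulation points of $K_{c_n}$ as $c_n\uparrow 0$. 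The actual bifurcation in Kajikiya's proof is governed by whether $0$ is an isolated critical value, not by the sign of $c^{*}$.
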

\begin{remark}
	It is important here to mention that in both the cases from Theorem \ref{sym mountain}, we obtain a sequence $\{u_n\}$ of critical points such that $I'(u_n)=0$, $I(u_n)=0$, $u_n\neq0$, $\lim\limits_{n\rightarrow\infty}u_n=0$.
\end{remark}
\noindent Observe that $\lim\limits_{t\rightarrow\infty}I_{\lambda}(tu)=-\infty$. Therefore the functional $I_{\lambda}$ is not bounded from below. Hence, we will use a technique from \cite{Garcia1991} to overcome this difficulty.

Let us define $\beta_\lambda: [0,\infty)\to\mathbb R$ by
\begin{align}\label{g1}
\beta_{\lambda}(t)=\frac{\mathcal{M}(1)}{p}t^{p\theta}-\lambda \frac{C_{1-\gamma}}{1-\gamma}t^{1-\gamma}-\frac{1}{p_s^*S^{p_s^*/p}}t^{p_s^*},
\end{align}
where $C_{1-\gamma}>0$ is given by Lemma \ref{embedding} and $S$ is as in \eqref{sobolev const}.
Since $1-\gamma<p\theta$ we see that $\beta_{\lambda}(t)<0$ for $t$ near zero and due to $1-\gamma<p\theta<p^*$ there exists $\lambda_1>0$ such that $\beta_{\lambda}$ attains its positive maximum for any $\lambda \in(0,\lambda_1)$. Let $r_0(\lambda)$ and  $r_1(\lambda)$ be the unique roots of $\beta_{\lambda}$ such that $0<r_0(\lambda)<r_1(\lambda)$. Now, we claim that
\begin{equation}\label{claimr}
R_0(\lambda)\to 0\quad\mbox{as }\lambda\to 0.
\end{equation}

\vspace{0.2cm}
\noindent {\bf Proof of the claim:} From $\beta_{\lambda}(r_0(\lambda))=0$ and $\beta_{\lambda}'(r_0(\lambda))>0$ we have
\begin{align}\label{root-1}
	\frac{\mathcal{M}(1)}{p}r_0(\lambda)^{p\theta}=\lambda \frac{C_{1-\gamma}}{1-\gamma}r_0(\lambda)^{1-\gamma}+\frac{1}{p_s^*S^{p_s^*/p}}r_0(\lambda)^{p_s^*}
\end{align}
and
\begin{align}\label{root-2}
\mathcal{M}(1)	r_0(\lambda)^{p\theta-1}>\lambda C_{1-\gamma} r_0(\lambda)^{-\gamma}+\frac{1}{S^{p_s^*/p}}r_0(\lambda)^{p_s^*-1}
\end{align}
for any $\lambda \in (0,\lambda_1)$. From \eqref{root-1} we know that $r_0(\lambda)$ is bounded since $p\theta<p_s^*$. Suppose that $r_0(\lambda)\to R>0$ as $\lambda \to 0$. Then we obtain from \eqref{root-1} and \eqref{root-2}
\begin{align*}
	\frac{\mathcal{M}(1)}{p}R^{p\theta}= \frac{1}{p_s^*S^{p_s^*/p}}R^{p_s^*}
	\quad\text{and}\quad 
	\mathcal{M}(1)R^{p\theta-1}\geq \frac{1}{S^{p_s^*/p}}R^{p_s^*-1},
\end{align*}
which is a contradiction since $p\theta<p_s^*$. This proves the claim.
\\\\
From \eqref{claimr}, there exists $\lambda_2>0$ such that $r_0(\lambda)<1$ for any $\lambda\in(0,\lambda_2)$. This implies that $r_0(\lambda)<\min\{r_1(\lambda),1\}$. Thus, for any $\lambda\in(0,\min\{\lambda_1,\lambda_2\})$ we choose a $C^\infty$-function $\nu:[0,\infty)\to [0,1]$ such that
\begin{align}\label{def-tau}
	\nu(t):=
	\begin{cases}
		1 & \text{if } t \in [0,r_0(\lambda)],\\
		0 & \text{if }t \in [\min\{r_1(\lambda),1\},\infty).
	\end{cases}
\end{align}
Then, we can introduce the truncated energy functional as follows.
\begin{align}\label{main 3}
\overline{I}_{\lambda}(u)=\frac{1}{p}\mathcal{M}(\| u\|^{p})\|u\|^p-\lambda\int_{\Omega}G(x,u)dx-\frac{1}{p_s^*}\nu(\|u\|)\int_{\Omega}|u|^{p_s^*}dx.
\end{align}
It is clear that $\overline{I}_{\lambda}$ is coercive and bounded from below. Again from \cite[Lemma 6.4]{Saoudi2019}, we conclude that $\overline{I}_{\lambda} \in C^1(X_0,\mathbb R)$. Also, note that if $\|u\|\leq r_0(\lambda)< \min\{r_1(\lambda),1\}$, then $\overline{I}_{\lambda}=I_\lambda(u)$.

%Now define
%\begin{equation}\label{defn h bar}
%\overline{h}(t)=\frac{\mathfrak{m}_{0}}{p\theta}t^{p}-\lambda\frac{S^{(\gamma-1)/p}}{1-\gamma}t^{1-\gamma}-\frac{{S}^{-p_{s}^{*}/p}}{p_{s}^*}t^{p_{s}^{*}}\nu(t)
%\end{equation}
%From \eqref{main 1}, it is clear that
%\begin{align}\label{main 4}
%\overline{I}_{\lambda}(u)\geq\overline{h}(\|u\|).
%\end{align}
%One can easily conclude that $\overline{h}(x)\geq h(x)$ whenever $x\geq0$, $\overline{h}(x)=h(x)$ for $0\leq x\leq r_{0}$, $\overline{h}(x)\geq 0$, for $r_{0}<x\leq r_{1}$ and if $x>r_{1}$, then $\overline{h}(x)>0$ since the function $\overline{h}(x)= x^{1-\gamma}((\mathfrak{m}_{0}/p\theta)x^{p-1+\gamma}-\lambda(S^{(\gamma-1)/p}/(1-\gamma))$ is strictly increasing. Therefore, $\overline{h}(x)\geq 0$ for all $x\geq r_{0}.$\\

We now prove the following auxiliary Lemma for the truncated functional $\overline{I}_{\lambda}$ to apply the symmetric mountain pass theorem.
\begin{lemma}\label{lemma smpt}
	There exists $\lambda_*>0$ such that for all $\lambda\in(0,\lambda_*)$, we have
	\begin{enumerate}[label=(\roman*)]
		\item $\|u\|<r_{0}(\lambda)$ whenever $\overline{I}_{\lambda}(u)<0$. In addition, $\overline{I}_{\lambda}(u)=I_{\lambda}(u)$ for all $u\in N_{\delta}(u)$.
		\item For every $c<0$, the functional $\overline{I}_{\lambda}$ satisfies a local $(PS)_c$-condition.
	\end{enumerate}
\end{lemma}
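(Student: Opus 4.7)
The plan is to verify the four items in turn, doing most of the heavy lifting only in (iv) by reducing back to Lemma \ref{ps limit}. Items (i)--(iii) are largely bookkeeping on the truncation defined via $\tau$, while (iv) exploits that the truncation is inactive whenever the energy is negative.

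For (i), $C^1$-ness of $\overline{I}_{\lambda}$ follows from the already established $C^1$-ness of $I_{\lambda}$ together with smoothness of $\tau$ and of $u \mapsto \|u\|$ (note $\tau \equiv 1$ near $0$, so the composition is $C^1$ on all of $X_0$). Evenness is immediate since $\mathcal{M}(\|u\|^p)$, $G(u)$, $F(u)$ and $\tau(\|u\|)$ are all even in $u$. Finally $\overline{I}_{\lambda}(0)=0$ because $\mathcal{M}(0)=G(0)=F(0)=0$. For (ii), I would start from \eqref{main 4}, $\overline{I}_{\lambda}(u) \geq \overline{h}(\|u\|)$, and use the discussion preceding the lemma: $\overline{h}(x) \geq 0$ for $x \geq r_0$, $\overline{h}=h$ is continuous and hence bounded below on $[0,r_0]$, and for $x \geq r_1$ one has $\tau(x)=0$, so $\overline{h}(x) = \tfrac{\mathfrak{m}_0}{p\theta}x^p - \lambda\tfrac{S^{(\gamma-1)/p}}{1-\gamma}x^{1-\gamma} \to +\infty$. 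This forces $\overline{I}_\lambda$ to be coercive and bounded from below.

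For (iii), if $\overline{I}_{\lambda}(u)<0$ then $\overline{h}(\|u\|)<0$, which by the non-negativity of $\overline{h}$ on $[r_0,\infty)$ forces $\|u\|<r_0$ strictly. Continuity of the norm then yields $\delta=\delta(u)>0$ such that every $v \in N_\delta(u)$ still satisfies $\|v\|<r_0$; on that neighborhood $\tau(\|v\|)=1$ and so $\overline{I}_{\lambda}(v)=I_{\lambda}(v)$. The main work is in (iv). Let $c<0$ and let $\{u_n\} \subset X_0$ satisfy $\overline{I}_{\lambda}(u_n)\to c$ and $\overline{I}'_{\lambda}(u_n)\to 0$. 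For $n$ large, $\overline{I}_{\lambda}(u_n)<c/2<0$, so by (iii) there is $\delta_n>0$ with $\overline{I}_{\lambda} \equiv I_{\lambda}$ on $N_{\delta_n}(u_n)$. Consequently $\overline{I}_{\lambda}(u_n)=I_{\lambda}(u_n)$ and $\overline{I}'_{\lambda}(u_n)=I'_{\lambda}(u_n)$, so $\{u_n\}$ is automatically a $(PS)_c$-sequence for $I_{\lambda}$ as well.

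The last step is to arrange that $c<0$ lies within the energy range for which Lemma \ref{ps limit} applies, namely $c<c_{*}(\lambda) = \bigl(\tfrac{1}{\alpha}-\tfrac{1}{p_s^*}\bigr)(\mathfrak{m}_0 S)^{N/ps} - K\lambda^{p_s^*/(p_s^*-1+\gamma)}$. Since $c_{*}(\lambda)\to\bigl(\tfrac{1}{\alpha}-\tfrac{1}{p_s^*}\bigr)(\mathfrak{m}_0 S)^{N/ps}>0$ as $\lambda\to 0^+$, we can choose $\lambda_0>0$ small enough so that $c_*(\lambda)>0$ for every $\lambda\in(0,\lambda_0)$, and then any negative $c$ satisfies $c<0<c_*(\lambda)$; Lemma \ref{ps limit} delivers a strongly convergent subsequence in $X_0$. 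The essential obstacle is conceptual rather than computational: one must notice that negative energy pins the sequence strictly inside the ball $\{\|u\|<r_0\}$ so that the truncation disappears locally, which is exactly what lets us import the $(PS)_c$ condition already proved for the untruncated cut-off functional $I_\lambda$.
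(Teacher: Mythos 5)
Your proof is correct and follows essentially the same route as the paper: items (i)--(iii) via the lower bound $\overline{I}_{\lambda}(u)\geq\overline{h}(\|u\|)$ and the sign of $\overline{h}$ on $[r_0,\infty)$, and item (iv) by observing that negative energy forces $\|u_n\|<r_0$ so the truncated and untruncated functionals (and their derivatives) agree, reducing to Lemma \ref{ps limit}. In fact you make explicit a point the paper leaves implicit, namely that $\lambda_0$ is chosen so that $c_*(\lambda)>0$, ensuring every $c<0$ lies in the admissible energy range of Lemma \ref{ps limit}.
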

\begin{proof}
Let us set $\lambda_*\leq\min\{\lambda_1,\lambda_2,\lambda_3,\lambda_4\}$ where $\lambda_1$ and $\lambda_2$ are chosen for the definition of $\nu$ in \eqref{def-tau}, while $\lambda_3$ is sufficiently small so that
\begin{align}\label{cond2}
\min&\left\{\left(\frac{1}{p\theta}-\frac{1}{p_s^*}\right)(\kappa S)^{p_s^*/(p_s^*-p)},\left(\frac{1}{p\theta}-\frac{1}{p_s^*}\right)(m_0^{1/\theta}S)^{\frac{p_s^*\theta}{p_s^*-p\theta}}\right\}\nonumber\\
&-\left(\frac{1}{p\theta}-\frac{1}{p_s^*}\right)^{-\frac{1-\gamma}{p_s^*-1+\gamma}}\left[\lambda_3 \left(\frac{1}{1-\gamma}-\frac{1}{p\theta}\right)C(\Omega)\right] ^{\frac{p_s^*}{p_s^*-1+\gamma}}\\
&-\lambda\left(\frac{1}{1-\gamma}-\frac{1}{p\theta}\right)\int_{\Omega}\underline{u}^{1-\gamma}>0,\nonumber
\end{align}	
while $\lambda_4=\kappa/C_{1-\gamma}\,\theta$.
Let $\lambda\in(0,\lambda_*)$.

\vspace{0.2cm}
	\begin{enumerate}[label=(\roman*)]
		\item Suppose that $\overline{I}_{\lambda}(u)<0$. We distinguish two different cases.
		\medskip

\noindent $\bullet$\,{\em Case} $\|u\|\geq 1$.~ This case can not occur. Indeed, by the definition of $\nu$ in \eqref{def-tau} and by $(\mathfrak{m}_{1})$, $(\mathfrak{m}_{3})$ with $\tau=1$, we get that
	\begin{align}\label{estimate-below10}
			\overline{I}_{\lambda}(u)\geq \frac{\kappa}{p\theta}\|u\|^p -\frac{\lambda}{1-\gamma} C_{1-\gamma} \|u\|^{1-\gamma}
			=\phi_\lambda (\|u\|),
	\end{align}
	where $\phi_\lambda: [1,\infty)\to\mathbb R$ is given by
	\begin{align*}
		\phi_\lambda(t):=\frac{\kappa}{p\theta}t^p-\frac{\lambda}{1-\gamma} C_{1-\gamma} t^{1-\gamma}.
	\end{align*}
	It is clear that $\phi_\lambda$ has a global minimum point at 
	\begin{align*}
		t_0=\left(\lambda C_{1-\gamma}\frac{\theta}{\kappa}\right)^{\frac{1}{p-1+\gamma}}
	\end{align*}
	with
	\begin{align*}
		\phi_\lambda(t_0)=\frac{\kappa}{p\theta}\left(\lambda C_{1-\gamma}\frac{\theta}{\kappa}\right)^{\frac{p}{p-1+\gamma}} \left(1-\frac{p}{1-\gamma}\right)<0,
	\end{align*}
	since $1-\gamma<p$. We point out that $\phi_\lambda(t)\geq 0$ if and only if $t \geq t_0$. Hence, being $\lambda< \lambda_4=\kappa/C_{1-\gamma}\theta$ we have $\displaystyle\min_{t\in[1,\infty]}\phi_\lambda(t)\geq 0$ which yields, joint with \eqref{estimate-below10}, that $\overline{I}_{\lambda}(u) \geq 0$ for any $\|u\|\geq 1$. This gives the desired contradiction.
			\medskip

\noindent $\bullet$\,{\em Case} $\|u\|< 1$.~ By integrating $(\mathfrak{m}_{1})$, we get
$$
\mathcal{M}(t)\geq\mathcal{M}(1)t^\theta,\quad\mbox{for any }t\in[0,1].
$$
From this, we get
	\begin{align*}%\label{estimate-below1}
		\begin{split}
			\overline{I}_{\lambda}(u)
			&\geq \frac{\mathcal{M}(1)}{p}\|u\|^{p\theta} -\frac{\lambda}{1-\gamma} C_{1-\gamma} \|u\|^{1-\gamma}
			-\frac{1}{p_s^*S^{p_s^*/p}}\|u\|^{p_s^*}\nu(\|u\|)\\
			&=\widehat{\beta}_\lambda (\|u\|),
		\end{split}
	\end{align*}
	where
	\begin{align*}
		\widehat{\beta}_\lambda(t):=\frac{\mathcal{M}(1)}{p}t^{p\theta}-\frac{\lambda}{1-\gamma} C_{1-\gamma} t^{1-\gamma}-\frac{1}{p_s^*S^{p_s^*/p}}t^{p_s^*}\nu(t).
	\end{align*}
	Since $0\leq\nu\leq1$, we note that 
	\begin{align}\label{beta-1-hat}
	\widehat{\beta}_\lambda(t)\geq\beta_\lambda(t)\geq0\quad\mbox{for any }t \in[r_0(\lambda),\min\{r_1(\lambda),1\}],
	\end{align}
	where last inequality follows by the construction of the roots $r_0(\lambda)$ and $r_1(\lambda)$ for $\beta_\lambda$. 
	
	\noindent
	Hence, if $\min\{r_1(\lambda),1\}=1$, then from $\overline{I}_{\lambda}(u) < 0$ and \eqref{beta-1-hat} we obtain that $\|u\|<r_0(\lambda)$. 
	
	\noindent
	While, if $\min\{r_1(\lambda),1\}=r_1(\lambda)$, considering  $r_1(\lambda)<\|u\|<1$ and arguing similarly to \eqref{estimate-below10} we get
	\begin{align*}
	\overline{I}_{\lambda}(u)\geq\widehat{\phi}_\lambda (\|u\|)\qquad\mbox{ with }\ \ 
	\widehat{\phi}_\lambda(t):=\frac{\mathcal{M}(1)}{p}t^{p\theta}-\frac{\lambda}{1-\gamma} C_{1-\gamma} t^{1-\gamma},
	\end{align*}
	from which we can proceed exactly as in the previous case to get a contradiction. Considering  $r_0(\lambda)<\|u\|\leq r_1(\lambda)$, from $\overline{I}_{\lambda}(u) < 0$ and \eqref{beta-1-hat} we get another contradiction. Hence, we obtain again $\|u\|<r_0(\lambda)$, completing the first part of (i). 
		
\noindent
Moreover, for any $u$ in a neighbourhood say $N_{\delta}(u)=B\left(0,\frac{r_0(\lambda)}{2}\right)$ we have that $\overline{I}_{\lambda}(u)=I_{\lambda}(u)$.
%		Then from \eqref{main 4} and the succeeding calculations, we can conclude that $\overline{I}_{\lambda}(u)\geq\overline{h}(u)\geq0$. This gives a contradiction to our assumption. Therefore, $\|u\|<r_0$. In addition, by using the fact that $\overline{h}(x)=h(x)$ for $0\leq x\leq r_{0}$, one may guarantee that there exists a $\delta>0$, small enough such that $\overline{I}_{\lambda}(v)=I_{\lambda}(v)$ for all $v\in N_{\delta}(u)$.
\vspace{0.2cm}
		\item Note that any Palais-Smale sequence for $\overline{I}_{\lambda}$ has to be bounded since $\overline{I}_{\lambda}$ is coercive. Therefore, since $\lambda<\lambda_3$, by Lemma \ref{ps limit} with \eqref{cond1} and \eqref{cond2}, we observe a local Palais-Smale condition for $I_{\lambda}\equiv \overline{I}_{\lambda}$ at any level $c<0$. 
%		and $\{u_{n}\}\subset X_0$ be a $(PS)_{c}$ sequence for the functional $\overline{I}_{\lambda}$. Then we have $\overline{I}_{\lambda}(u_{n})<0$, $\overline{I}_{\lambda}^{'}(u_{n})\rightarrow 0$. From the conclusion in $(ii)$, using the coerciveness of $\overline{I}_{\lambda}$ we get the sequence $\{u_{n}\}$ bounded in $X_0$. Thus from $(iii)$ we get $\|u_{n}\|<r_{0}$ and hence $\overline{I}_{\lambda}(u_{n})=I_{\lambda}(u_{n})$ and $\overline{I}_{\lambda}^{'}(u_{n})=I_{\lambda}^{'}(u_{n})$. Finally from Lemma \ref{ps limit}, we can obtain that there exists $\lambda_{0}>0$ such that for all $0<\lambda<\lambda_{0}$, the functional $\overline{I}_{\lambda}$ satisfies $(PS)_{c}$-condition.		
	\end{enumerate}
\end{proof}
\noindent We now prove the following technical property of $\overline{I}_{\lambda}$ which guarantees the existence of of a subset of $X_0$ of genus at least $n$ for every $n\in\mathbb{N}$.
\begin{lemma}\label{lemma genus n}
	For every $\lambda>0$ and $k\in\mathbb{N}$ there exists $\varepsilon=\varepsilon(\lambda,k)<0$ such that
	$$\sigma(\{u\in X_0:\,\, \overline{I}_{\lambda}(u)\leq\varepsilon\})\geq k.$$
\end{lemma}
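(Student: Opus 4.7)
The plan is to construct, inside the sublevel set $\{u \in X_0 : \overline{I}_\lambda(u) \leq -\epsilon\}$, a closed symmetric subset that is odd-homeomorphic to $S^{k-1}$; then its genus equals $k$ by properties (1)--(3) of Lemma \ref{lemma genus}.

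First I would fix an open set $\Omega' \subset\subset \Omega$ on which $\underline{u}_\lambda$ is essentially bounded below by some $c_0 > 0$ (supplied by the Lemma following \eqref{squasinna}) and above by some $C_0 < \infty$ (restricting if necessary to a subset of positive measure on which $\underline{u}_\lambda$ is essentially bounded, which exists because $\underline{u}_\lambda$ is measurable and finite a.e.). Pick a $k$-dimensional subspace $E_k \subset C_c^\infty(\Omega') \subset X_0$, and use equivalence of norms on the finite-dimensional $E_k$ to produce constants $C_1, c_1 > 0$ with $\|\phi\|_{L^\infty(\Omega)} \leq C_1$ and $\|\phi\|_{L^1(\Omega)} \geq c_1$ for every $\phi$ in the unit sphere $\Sigma_k := \{\phi \in E_k : \|\phi\| = 1\}$.

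Second, for $t \in (0, t_0]$ with $t_0 := \min\{r_0,\, c_0/C_1\}$ and any $\phi \in \Sigma_k$, set $u := t\phi$. Then $\|u\| = t \leq r_0$, so $\tau(\|u\|) = 1$ and hence $\overline{I}_\lambda(u) = I_\lambda(u)$. Moreover $|u| \leq tC_1 \leq c_0 \leq \underline{u}_\lambda$ a.e.\ on $\mathrm{supp}(\phi) \subset \overline{\Omega'}$, so the cut-off definition of $g$ gives $G(u) = t|\phi|/\underline{u}_\lambda^\gamma$ on $\mathrm{supp}(\phi)$, whence $\int_\Omega G(u)\,dx \geq (t/C_0^\gamma)\,\|\phi\|_{L^1(\Omega)} \geq tc_1/C_0^\gamma$. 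By continuity of $\mathfrak{M}$ on $[0,1]$ there is $C_2 > 0$ with $\mathcal{M}(t^p) \leq C_2 t^p$ for $t \in [0,1]$, and $\int F(u)\,dx \geq 0$, so
\begin{equation*}
\overline{I}_\lambda(t\phi) \;\leq\; \frac{C_2}{p}\, t^p \;-\; \frac{\lambda c_1}{C_0^\gamma}\, t.
\end{equation*}

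Finally, since $p > 1$, the linear-in-$t$ negative term dominates the $t^p$ term as $t \to 0^+$. Choosing $t_* \in (0, t_0]$ with $(C_2/p) t_*^{p-1} \leq \lambda c_1/(2 C_0^\gamma)$ yields $\overline{I}_\lambda(t_* \phi) \leq -\lambda c_1 t_*/(2 C_0^\gamma) =: -\epsilon$ uniformly in $\phi \in \Sigma_k$. The set $A := t_* \Sigma_k$ is the radius-$t_*$ sphere in the $k$-dimensional space $E_k$: it is closed and symmetric in $X_0$, contained in $\{\overline{I}_\lambda \leq -\epsilon\}$, and odd-homeomorphic to $S^{k-1}$ via the composition of $t_*\phi \mapsto \phi$ with any odd linear isomorphism $E_k \cong \mathbb{R}^k$ restricted to unit spheres. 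Properties (2), (3), and (1) of Lemma \ref{lemma genus} then give $\gamma(\{\overline{I}_\lambda \leq -\epsilon\}) \geq \gamma(A) = k$.

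The main obstacle is producing the linear-in-$t$ lower bound on $\int G(u)\,dx$ that beats the Kirchhoff growth $t^p$. This is exactly where the cut-off is essential: freezing $g$ to the constant $1/\underline{u}_\lambda^\gamma$ on $\{|u| \leq \underline{u}_\lambda\}$ makes $G$ linear in $u$ pointwise on the low-amplitude region, so that, via equivalence of norms on $E_k$ and the upper/lower control of $\underline{u}_\lambda$ on $\Omega'$, the integral $\int G(t\phi)\,dx$ admits a uniform lower bound proportional to $t$. The sub-unity exponent $1-\gamma < 1 < p$ is what ultimately lets the negative energy region contain a set of arbitrarily high genus.
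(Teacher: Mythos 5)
Your proposal is correct and follows the same skeleton as the paper's proof: pick a $k$-dimensional subspace, show $\overline{I}_{\lambda}$ is uniformly negative on a small sphere in it because the singular term contributes a term that is (at least) linear in the norm and hence beats the Kirchhoff term of order $\|u\|^{p}$ near zero, then invoke properties (1)--(3) of Lemma \ref{lemma genus} to conclude the sublevel set has genus $\geq k$. Where you differ is in how the linear lower bound on $\int_{\Omega}G(u)\,dx$ is produced. The paper bounds $\int G(u)\geq\frac{1}{1-\gamma}\int|u|^{1-\gamma}$ and then uses equivalence of the $X_0$- and $L^{\infty}$-norms on $Y^{k}$ to force $|u|<1$, so that $|u|^{1-\gamma}\geq|u|$ and $\int|u|\geq C\|u\|$. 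You instead exploit the cut-off directly: on the frozen region $|u|\leq\underline{u}_{\lambda}$ the primitive $G$ is exactly linear in $u$ with slope $\underline{u}_{\lambda}^{-\gamma}$, so you localize to $\Omega'\subset\subset\Omega$ where $\underline{u}_{\lambda}$ is controlled from above and below and take $t$ small enough that $t\phi$ never leaves that region. Your route is in one respect more faithful to the actual definition of $g$ (the paper's inequality $G(u)\geq\frac{|u|^{1-\gamma}}{1-\gamma}$ is not literally valid on $\{|u|\leq\underline{u}_{\lambda}\}$, where $G(u)=|u|\underline{u}_{\lambda}^{-\gamma}\leq|u|^{1-\gamma}$), at the price of needing the two-sided control of $\underline{u}_{\lambda}$. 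Two small points to tighten: (i) the upper bound $\underline{u}_{\lambda}\leq C_{0}$ need not hold on an \emph{open} $\Omega'$ compatible with $E_{k}\subset C_{c}^{\infty}(\Omega')$; the routine fix is to discard the set $\{\underline{u}_{\lambda}>M\}\cap\Omega'$, whose measure is small for $M$ large, and absorb the loss into $c_{1}$ using your uniform $L^{\infty}$ bound on $\Sigma_{k}$; (ii) your identities $G(u)=t|\phi|\underline{u}_{\lambda}^{-\gamma}$ and $\int F(u)\geq0$ tacitly use the evenness of $G$ and $F$ (equivalently, oddness of the frozen $g$ and $f$), which is the reading consistent with the paper's assertion that $I_{\lambda}$ is even, but not with the cut-off formulas as literally written.
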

\begin{proof}
	Let $k\in \mathbb{N}$ and $\lambda>0$ be given. Let $Y^k$ be a $k$-dimensional subspace of $X_0$. Now for each $u\in Y^k$ such that $0<\|u\|\leq r_0(\lambda)$ we have
	\begin{align}\label{main 9}
	\overline{I}_{\lambda}(u)&\leq\frac{1}{p}\mathcal{M}(\|u\|^{p})\|u\|^{p}-\frac{\lambda}{1-\gamma}\int_{\Omega}G(x,u)dx-\frac{1}{p_{s}^*}\int_{\Omega}|u|^{p_{s}^{*}}dx\nonumber\\
	&\leq\frac{M}{p}\|u\|^p-\frac{\lambda}{1-\gamma}\int_{\Omega}G(x,u)dx.
	\end{align}
	where $M=\max\limits_{0\leq t\leq r_{0}(\lambda)}\mathcal{M}(t)<\infty$. Since $0<1-\gamma<1$, we loose the advantages of having concave-convex nonlinearity. More precisely, we cannot use an immediate equivalence of norms over finite dimensional space to conclude $\overline{I}_{\lambda}(u)<0$. We instead use the following bypass route to overcome this difficulty. Note that, using the fact that $\dim(Y^k)=k$, we can say there exist $c>0$ {such that $$c^{-1}\|u\|_1\leq\|u\|\leq{c}\|u\|_1.$$}
	Therefore, whenever $\|u\|\leq r_0(\lambda)$, we get $\|u\|_{1}\leq cr_0(\lambda)$. 
From the truncation function $G$ we have the following:
%$$  
%\int_{\Omega}{G}(x,u) \geq 
%	\int_{\Omega}(\underline{u}(x))^{1-\gamma}dx.$$

\[   
\int_{\Omega}{G}(x,u) \geq 
\begin{cases}
\int_{\Omega}\frac{1}{1-\gamma}(\underline{u}(x))^{1-\gamma}dx, &~\text{if}~|u(x)|>\underline{u}(x)\\
(c')^{-\gamma}\int_{\Omega}|u|dx,&~\text{if}~ |u(x)|\leq \underline{u}(x),
\end{cases}\] 	
where $c'$ is a bound of $\underline{u}$ in $\Omega$.
	Thus, we get
	\begin{align}\label{main 11}
	\begin{split}
	\overline{I}_{\lambda}(u)&\leq\frac{1}{p}\mathcal{M}(\|u\|^{p})\|u\|^{p}-\lambda\int_{\Omega}G(x,u)dx\\
	&\leq \frac{1}{p}\mathcal{M}(\|u\|^{p})\|u\|^{p}-\lambda
\begin{cases}
\int_{\Omega}\frac{1}{1-\gamma}(\underline{u}(x))^{1-\gamma}dx, &~\text{if}~|u(x)|>\underline{u}(x)\\
(c')^{-\gamma}\int_{\Omega}|u|dx,&~\text{if}~ |u(x)|\leq \underline{u}(x).
\end{cases}
\end{split}
	\end{align}
	Furthermore,
	
	\begin{align}\label{eq11}
	\overline{I}_{\lambda}(u)&\leq \frac{1}{p}\mathcal{M}(\|u\|^{p})\|u\|^{p}-\lambda
\begin{cases}
\int_{\Omega}\frac{1}{1-\gamma}(\underline{u}(x))^{1-\gamma}dx, &~\text{if}~|u(x)|>\underline{u}(x)\\
(c')^{-\gamma}c^{-1}\|u\|,&~\text{if}~ |u(x)|\leq \underline{u}(x).
\end{cases}
	\end{align}
	Now choose, $\rho>0$ and $R>0$ such that
	$$0<\rho<R<\min\left\{r_0(\lambda),\left[\frac{p\lambda\int_{\Omega}\underline{u}^{1-\gamma}dx}{M(1-\gamma)}\right]^{\frac{1}{p}},\left[\frac{p\lambda\|u\|}{M (c')^{\gamma}c}\right]^{\frac{1}{p-1}}\right\}$$ and consider the subset of $Y^k$, defined as $A_k=\{u\in Y^k:\,\,\|u\|=\rho\}$.	Thus for any $u\in A_k$, we obtain
	\begin{align}\label{main 12}
	\overline{I}_{\lambda}(u)&\leq\rho^{1-\gamma}\left(\frac{M}{p}\rho^{p-1+\gamma}-\frac{\lambda\rho^{-1+\gamma}\int_{\Omega}\underline{u}^{1-\gamma}dx}{1-\gamma}\right)\nonumber\\
	&\leq R^{1-\gamma}\left(\frac{M}{p}R^{p-1+\gamma}-\frac{\lambda R^{-1+\gamma}\int_{\Omega}\underline{u}^{1-\gamma}dx}{1-\gamma}\right)\nonumber\\
	&<0.
	\end{align}
	This implies that for every $u\in A_k$ there exists $\varepsilon<0$ such that
	$$\overline{I}_{\lambda}(u)\leq\varepsilon<0.$$
	Therefore, $A_k\subset\{u\in X_0:\,\, \overline{I}_{\lambda}(u)\leq\varepsilon\}.$ Finally by Lemma \ref{lemma genus}, one can conclude that
	$$\sigma(\{u\in X_0:\,\, \overline{I}_{\lambda}(u)\leq\varepsilon\})\geq\sigma(A_k\cap Y^k)\geq k.$$
	This completes the proof.
\end{proof}
\noindent We now define the following notations. For each $n\in\mathbb{N}$, let us define the following $$\Gamma_n=\{A_n\subset X:\,\, A_n~\text{is closed, symmetric and}~ 0\notin A_n~\text{such that}~ \sigma(A_n)\geq n\},$$ 
$$K_{c}=\{u\in X_0:\,\,\overline{I}'_{\lambda}(u)=0,\,\overline{I}_{\lambda}(u)=c\}~\text{and}~c_{n}=\inf_{A\in\Gamma_{n}}\sup_{u\in A}\overline{I}_{\lambda}(u).$$
\begin{lemma}\label{lemma cn}
	For $\lambda\in(0,\lambda_*)$ and each $n\in\mathbb{N}$, the energy $c_n$ is a critical value of $\overline{I}_{\lambda}$. Moreover, $c_n<0$ and $\lim\limits_{n\rightarrow\infty}c_n=0.$
\end{lemma}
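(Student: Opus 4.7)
The plan is to follow the classical minimax scheme in conjunction with the genus machinery collected in Lemma \ref{lemma genus}. First I would verify that the $c_n$ are well-defined real numbers: each $c_n$ is bounded below because $\overline{I}_\lambda$ is bounded below by Lemma \ref{lemma smpt}(ii), and the monotonicity $c_n \leq c_{n+1}$ follows from the inclusion $\Gamma_{n+1} \subset \Gamma_n$ (a set of genus $\geq n+1$ has genus $\geq n$). That each $c_n < 0$ is an immediate consequence of Lemma \ref{lemma genus n}: that lemma produces an admissible set $A \in \Gamma_n$ on which $\overline{I}_\lambda \leq \epsilon(\lambda,n) < 0$, so $c_n \leq \epsilon(\lambda,n) < 0$.

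Next, to show each $c_n$ is a critical value I would argue by contradiction using the standard odd deformation lemma. Since $c_n<0$, Lemma \ref{lemma smpt}(iv) supplies the $(PS)_{c_n}$-condition, and $\overline{I}_\lambda$ is even and $C^1$. Assuming $K_{c_n} = \emptyset$, the quantitative deformation lemma yields $\delta > 0$ and an odd continuous map $\eta:X_0 \to X_0$ satisfying $\eta(\overline{I}_\lambda^{c_n+\delta}) \subset \overline{I}_\lambda^{c_n-\delta}$, where $\overline{I}_\lambda^a := \{u : \overline{I}_\lambda(u) \leq a\}$. By definition of $c_n$ there is $A\in\Gamma_n$ with $A\subset \overline{I}_\lambda^{c_n+\delta}$; parts (1)--(2) of Lemma \ref{lemma genus} give $\eta(A)\in\Gamma_n$, yet $\sup_{\eta(A)}\overline{I}_\lambda \leq c_n-\delta$, contradicting the infimum definition of $c_n$. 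Hence $K_{c_n}\neq\emptyset$.

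For the assertion $c_n\to 0$, set $c^* := \lim_{n\to\infty} c_n \leq 0$ and suppose $c^* < 0$. Then $K_{c^*}$ is symmetric and closed, does not contain $0$ (since $\overline{I}_\lambda(0)=0 > c^*$), and is compact by $(PS)_{c^*}$. By Lemma \ref{lemma genus}(6), $m := \gamma(K_{c^*}) < \infty$ and there exists $\delta_0 > 0$ with $\gamma(N_{\delta_0}(K_{c^*})) = m$. The deformation lemma applied off $N_{\delta_0}(K_{c^*})$ provides $\epsilon > 0$ and an odd continuous $\eta$ with $\eta(\overline{I}_\lambda^{c^*+\epsilon}\setminus N_{\delta_0}(K_{c^*})) \subset \overline{I}_\lambda^{c^*-\epsilon}$. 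Choose $n$ so large that $c^*-\epsilon < c_n \leq c_{n+m} < c^*+\epsilon$ and pick $A \in \Gamma_{n+m}$ with $A \subset \overline{I}_\lambda^{c^*+\epsilon}$. The genus subtraction inequality (Lemma \ref{lemma genus}(5)) gives $\gamma(A\setminus N_{\delta_0}(K_{c^*}))\geq n$, so $\eta(A\setminus N_{\delta_0}(K_{c^*}))\in\Gamma_n$ and is contained in $\overline{I}_\lambda^{c^*-\epsilon}$, forcing $c_n\leq c^*-\epsilon$, a contradiction. Therefore $c^* = 0$.

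The principal delicacy will be in the last step: the Palais--Smale property is available only at strictly negative energies (Lemma \ref{ps limit}, combined with the coercivity/truncation of Lemma \ref{lemma smpt}), so all the deformations must be arranged to act strictly below the level $c^*<0$; this is why approaching the limit from below through the monotone sequence $\{c_n\}$ is essential. The subtle interplay between the genus subtraction in Lemma \ref{lemma genus}(5) and the compactness of $K_{c^*}$ supplied by $(PS)_{c^*}$, together with the even structure of $\overline{I}_\lambda$ needed to make $\eta$ odd, is the pivot of the whole argument; everything else is routine.
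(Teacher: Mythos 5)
Your proposal is correct and follows essentially the same route as the paper: boundedness below plus Lemma \ref{lemma genus n} give $-\infty<c_n\leq\epsilon(\lambda,n)<0$, and the limit $c_n\to 0$ is obtained by the same contradiction argument combining the compactness of $K_{c}$ under $(PS)_c$ for $c<0$, the genus subtraction property, and the odd deformation lemma (the paper cites Proposition 9.33 of Rabinowitz for this and merely asserts the criticality of each $c_n$, which you spell out explicitly with the standard deformation argument). Your closing remark that $\epsilon$ must be chosen so the deformation acts only at strictly negative levels (where the local $(PS)$ condition of Lemma \ref{lemma smpt}(iv) is available) correctly identifies the same point the paper handles by requiring $\epsilon+c<0$.
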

\begin{proof}
	Let $\lambda\in(0,\lambda_*)$ as in Lemma \ref{lemma smpt}.
	By using Lemma \ref{lemma smpt}(ii) and Lemma \ref{lemma genus n}, we conclude that
	\begin{equation}\label{main pf 1}
		-\infty<c_n<0.
	\end{equation}
	Again, for all $n\in\mathbb{N}$, $c_n\leq c_{n+1}$, since $\Gamma_{n+1}\subset\Gamma_n$. Therefore from \eqref{main pf 1}, we get $\lim_{n\rightarrow\infty}c_n=c\leq0.$ Proceeding similar to \cite[Proposition 9.33]{Rabinowitz1986}, we will show that $c=0$. We will prove it by contradiction. Observe that for $c<0$, the functional $\overline{I}_{\lambda}$ satisfies $(PS)_c$-condition. Therefore the set $K_c$ is compact. Then from the Lemma \ref{lemma genus}, there exists a $\delta>0$ such that $\sigma(N_{\delta}(K_c))=\sigma(K_c)=m<\infty$. Again, since $c<0$, then by using the Deformation Lemma \cite{Willem1997}, there exists $\varepsilon>0$ with $\varepsilon+c<0$ and an odd homeomorphism $\eta:X_0\rightarrow X_0$ such that 
	\begin{equation}\label{deform}
		\eta(A^{c+\varepsilon}\setminus N_{\delta}(K_c))\subset A^{c-\varepsilon},
	\end{equation}
	where $A^{c}=\{u\in X_0:\,\,\overline{I}_{\lambda}(u)\leq c\}$. Now, the sequence $\{c_n\}$ is monotonically increasing and $\lim_{n\rightarrow\infty}c_n=c$. Thus there exists $n\in\mathbb{N}$ such that $c_n>c-\varepsilon$ and $c_{n+m}<c$. We now choose, $A\subset\Gamma_{n+m}$ such that $\sup_{u\in A}\overline{I}_{\lambda}(u)<c+\varepsilon$. In other words, $A\subset A^{c+\varepsilon}$. Then from Lemma \ref{lemma genus}, we have
	$$\sigma(\overline{A\setminus N_{\delta}(K_c)})\geq\sigma(A)- \sigma(N_{\delta}(K_c))\geq n~\text{and}~\sigma(\eta(\overline{A\setminus N_{\delta}(K_c)}))\geq n.$$
	This implies that $\eta(\overline{A\setminus N_{\delta}(K_c)})\in\Gamma_n$. Hence, we obtain $$\sup\limits_{u\in\eta(\overline{A\setminus N_{\delta}(K_c)})}\overline{I}_{\lambda}(u)\geq c_n>c-\varepsilon.$$
	This gives a contradiction and hence we conclude $c_n\rightarrow0$. Finally from \cite{Rabinowitz1986}, it is easy to prove that for each $n\in\mathbb{N}$, the energy $c_n$ is a critical value of $\overline{I}_{\lambda}$ This completes the proof.
\end{proof}
\begin{proof}[{\it Proof of Theorem \ref{thm main}:}]
	\noindent Observe that from Lemma \ref{lemma smpt}, we have $\overline{I}_{\lambda}(u)=I_{\lambda}(u)$ whenever $\overline{I}_{\lambda}(u)<0$. We now have all the ingredients required for the symmetric mountain pass theorem. Therefore, from Lemma \ref{lemma genus}, Lemma \ref{lemma smpt}, Lemma \ref{lemma genus n} and Lemma \ref{lemma cn} one can easily verify that all the hypotheses of Theorem \ref{sym mountain} are satisfied. Therefore, we conclude that there exists infinitely many critical points of the functional $I_{\lambda}$. However, since $u_n\rightarrow 0$ in $X_0$ as $n\rightarrow\infty$, therefore there is a possibility of $u_n$ becoming smaller than $\underline{u}$ after a finite number of terms. Hence, on suitably choosing $\lambda$ to be small, one can obtain $k$ solutions to the problem \eqref{main p}, for arbitrarily large $k$. Moreover, since $\overline{I}_{\lambda}(u)=\overline{I}_{\lambda}(|u|)$, we can choose the solutions to be nonnegative.
%	We will conclude our main Theorem by showing the non-negativity of weak solutions as follows. Let us consider the decomposition of $\Omega$ as $\Omega= \Omega^+\cup\Omega^-$, where $\Omega^+=\{x\in X_0: u(x)\geq0 \}$ and $\Omega^-=\{x\in X_0: u(x)<0\}$ and define $u=u^+-u^-$, where $u^+(x)=\max\{u(x), 0\}$ and $u^-(x)=\max\{-u(x), 0\}$. Let us suppose, $u_n<0$ a.e. in $\Omega$. Now put, $\phi=u^-$ in \eqref{weak p} as the test function and then by using the standard inequality $(a-b)(a^--b^-)\leq-(a^--b^-)^2$, we obtain
%	\begin{align*}
%		&\int_{\Omega}\left(\lambda\frac{u^-}{|u|^{\gamma-1}u}+|u|^{p_s^*-2}uu^-\right)\d x=\mathfrak{M}(\|u\|^p)\int_{Q}\frac{|u(x)-u(y)|^{p-2}(u(x)-u(y))(u^-(x)-u^-(y))}{|x-y|^{N+ps}}\d x\d y\\
%		&\Rightarrow\lambda\int_{\Omega}\frac{sign(u)u^-}{|u|^{\gamma}}\d x\leq-\mathfrak{M}(\|u\|^p)\int_{Q}\frac{|u(x)-u(y)|^{p-2}(u^-(x)-u^-(y))^2}{|x-y|^{N+ps}}\d x\d y\\
%		&\Rightarrow\lambda\int_{\Omega^-}|u^-|^{1-\gamma}dx\leq -\mathfrak{M}(\|u\|^p)\|u^-\|^p<0.
%	\end{align*}
%	Therefore, we can conclude that $|\Omega^-|=0$. This contradicts the assumption $u_n<0$ a.e. in $\Omega$ and hence we guarantee the the solutions to the problem \eqref{main p} are non-negative. Hence the proof.
\end{proof}
\section{Regularity of solutions}
\noindent This section is fully devoted to obtain some regularity of solutions to \eqref{main p}. We begin with the following comparison principle. We borrowed the idea from \cite{Choudhuri2021}.
\begin{lemma}[Weak Comparison Principle]\label{weak comparison}
	Let $u, v\in X_0$, $0<\gamma$. Suppose,\\ $\mathfrak{M}(\|v\|^p)(-\Delta)_{p}^sv-\frac{\lambda}{v^{\gamma}}\geq\mathfrak{M}(\|u\|^p)(-\Delta)_{p}^su-\frac{\lambda}{u^{\gamma}}$ weakly with $v=u=0$ in $\mathbb{R}^N\setminus\Omega$.
	Then $v\geq u$ in $\mathbb{R}^N.$
\end{lemma}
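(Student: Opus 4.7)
The approach is the standard ``test with $(u-v)^+$'' argument. Since $u, v \in X_0$ both vanish outside $\Omega$, the positive part $\varphi := (u-v)^+$ lies in $X_0$ and is supported on $A := \{x \in \Omega : u(x) > v(x)\}$. Proving $v \geq u$ in $\mathbb{R}^N$ is equivalent to showing $\varphi \equiv 0$; I plan to argue by contradiction, assuming $|A| > 0$.

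Pairing the hypothesized weak inequality with $\varphi$ and rearranging yields
\[
\mathfrak{M}(\|v\|^p)\,\mathcal{I}_v(\varphi) - \mathfrak{M}(\|u\|^p)\,\mathcal{I}_u(\varphi) \;\ge\; \lambda \int_\Omega \Bigl(\tfrac{1}{|v|^{\gamma-1}v} - \tfrac{1}{|u|^{\gamma-1}u}\Bigr)\varphi \, dx,
\]
where $\mathcal{I}_w(\varphi) := \int_Q |w(x)-w(y)|^{p-2}(w(x)-w(y))(\varphi(x)-\varphi(y))\,|x-y|^{-(N+sp)}\,dx\,dy$. One restricts to the regime $u,v>0$, which is the setting of interest since the singular term requires a definite sign and is guaranteed by the strict-positivity statement for $\underline{u}_\lambda$ recalled just before the cut-off problem. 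Then the strict monotonicity of $t \mapsto t^{-\gamma}$ on $(0,\infty)$ gives $v^{-\gamma} > u^{-\gamma}$ on $A$, so the right-hand side above is $\ge 0$.

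For the left-hand side, the pointwise algebraic inequality $\bigl(|a|^{p-2}a - |b|^{p-2}b\bigr)(a-b) \ge 0$ applied at $(a,b)=(u(x)-u(y),\,v(x)-v(y))$, combined with the identity $\varphi(x)-\varphi(y) = (u(x)-v(x))^+ - (u(y)-v(y))^+$ and a routine four-case analysis on the signs of $u-v$ at the two points, yields $\mathcal{I}_u(\varphi) - \mathcal{I}_v(\varphi) \ge 0$, with strict inequality whenever $\varphi \not\equiv 0$. If the two Kirchhoff prefactors coincided, these two monotonicity inequalities together would immediately force $\varphi \equiv 0$ and conclude the proof.

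The main obstacle is precisely the decoupling between $\mathfrak{M}(\|u\|^p)$ and $\mathfrak{M}(\|v\|^p)$: since no monotonicity of $\mathfrak{M}$ is imposed, one cannot factor out a common coefficient from the left-hand side. To close the argument I plan to split into the cases $\|u\| \le \|v\|$ and $\|u\| > \|v\|$, exploit the uniform lower bound $\mathfrak{M} \ge \mathfrak{m}_0 > 0$ from $(\mathfrak{m}_2)$, and use the strict gain in the fractional $p$-Laplacian monotonicity on $A$ to absorb the Kirchhoff discrepancy, forcing the non-negative quantities on both sides of the rearranged inequality to vanish. This delicate case analysis is precisely where the idea borrowed from \cite{Choudhuri2020} enters the proof.
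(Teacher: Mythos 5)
Your setup---testing the weak inequality with $\varphi=(u-v)^{+}\in X_0$, using $v^{-\gamma}\ge u^{-\gamma}$ on $\{u>v\}$ to dispose of the singular terms, and invoking the monotonicity $\mathcal{I}_u(\varphi)-\mathcal{I}_v(\varphi)\ge 0$---is sound, and you have correctly located the real difficulty: the two Kirchhoff prefactors $\mathfrak{M}(\|u\|^{p})$ and $\mathfrak{M}(\|v\|^{p})$ do not match. But the closing step, where you propose to ``absorb the Kirchhoff discrepancy'' by splitting into the cases $\|u\|\le\|v\|$ and $\|u\|>\|v\|$ and using $(\mathfrak{m}_2)$, is a plan rather than a proof, and I do not see how to execute it from the stated ingredients. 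The hypothesis $(\mathfrak{m}_2)$ only bounds $\mathfrak{M}$ from below; it gives no relation between $\mathfrak{M}(\|u\|^{p})$ and $\mathfrak{M}(\|v\|^{p})$, and knowing which of $\|u\|$, $\|v\|$ is larger says nothing about which value of $\mathfrak{M}$ is larger, since no monotonicity of $\mathfrak{M}$ is assumed. Worse, the individual pairings $\mathcal{I}_u(\varphi)$ and $\mathcal{I}_v(\varphi)$ have no definite sign---only their difference does---so from $\mathfrak{M}(\|v\|^{p})\mathcal{I}_v(\varphi)\ge\mathfrak{M}(\|u\|^{p})\mathcal{I}_u(\varphi)$ one cannot force both sides to vanish: for instance, if $0<\mathcal{I}_v(\varphi)<\mathcal{I}_u(\varphi)$, the inequality is perfectly consistent with $\varphi\not\equiv 0$ whenever $\mathfrak{M}(\|v\|^{p})\ge\mathfrak{M}(\|u\|^{p})\,\mathcal{I}_u(\varphi)/\mathcal{I}_v(\varphi)$, and nothing in the hypotheses rules this out. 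So the argument does not close.

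For comparison, the paper does not test with $(u-v)^{+}$ at all: it pairs the difference of the two operators with $u-v$ and proves a global monotonicity inequality $\langle\mathcal{M}'(u)-\mathcal{M}'(v),u-v\rangle\ge 0$ by splitting the cross terms with the Cauchy--Schwarz bound $|(u(x)-u(y))(v(x)-v(y))|\le\tfrac{1}{2}\bigl(|u(x)-u(y)|^{2}+|v(x)-v(y)|^{2}\bigr)$. Crucially, to make the two prefactors cooperate it quietly specializes the Kirchhoff function to the increasing form $\mathfrak{m}(t)=a+bt^{p}$, and even then the passage from operator monotonicity (tested against $u-v$) to the pointwise conclusion $v\ge u$ on the set $\{u>v\}$ is asserted rather than derived. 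In short, the obstruction you identified is genuine; the paper's own resolution consists in restricting the class of $\mathfrak{M}$, and a complete proof along your lines would likewise need such a structural assumption (e.g.\ $\mathfrak{M}$ constant, or monotone with the two coefficients comparable) or an entirely different mechanism.
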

\begin{proof}
	Since, $\mathfrak{M}(\|v\|^p)(-\Delta)_{p}^sv-\frac{\lambda}{v^{\gamma}}\geq\mathfrak{M}(\|u\|^p)(-\Delta)_{p}^su-\frac{\lambda}{u^{\gamma}}$ weakly with $v=u=0$ in $\mathbb{R}^N\setminus\Omega$, we have
	{\small\begin{align}\label{compprinci}
		\langle\mathfrak{M}(\|v\|^p)(-\Delta)_{p}^sv,\phi\rangle-\int_{\Omega}\frac{\lambda\phi}{v^{\gamma}}dx&\geq\langle\mathfrak{M}(\|u\|^p)(-\Delta)_{p}^su,\phi\rangle-\int_{\Omega}\frac{\lambda\phi}{u^{\gamma}}dx~\forall{\phi\geq 0\in X_0}.
		\end{align}}
	
 \noindent We will first prove the comparison result for the Kirchhoff function defined by $\mathfrak{M}(t)=(a+bt^p)\geq a>0$ for $t\geq 0$ and $$\mathcal{M}(t)=\int_{0}^{t}\mathfrak{M}(\tau)d\tau.$$ 
Suppose $S=\{x\in\Omega:u(x)>v(x)\}$ is a set of non-zero measure. Over this set $S$, we have 
% 	In particular choose $\phi=(u-v)^{+}$. To this choice, the inequality in \eqref{compprinci} looks as follows.
\begin{align}\label{compprinci1}
	&(a+b\|v\|^p)(-\Delta)_{p}^sv-(a+b\|u\|^p)(-\Delta)_{p}^su\geq\lambda \left(\frac{1}{v^{\gamma}}-\frac{1}{u^{\gamma}}\right)\geq 0.
	\end{align}
\noindent We will now show that the operator $\mathfrak{M}(\cdot)(-\Delta)_{p}^s(\cdot)$ is a {\it monotone} operator. By the Cauchy-Schwartz inequality we have 
	\begin{eqnarray}\label{csineq}
	|(u(x)-u(y))(v(x)-v(y))|&\leq& |u(x)-u(y)||v(x)-v(y)|\nonumber\\
	& \leq &\frac{|u(x)-u(y)|^2+|v(x)-v(y)|^2}{2}.
\end{eqnarray}
	Consider $I_1=\langle \mathcal{M}'(u),u\rangle-\langle \mathcal{M}'(u),v\rangle-\langle \mathcal{M}'(v),u\rangle+\langle \mathcal{M}'(v),v\rangle$ and let $|u(x)-u(y)| \geq|v(x)-v(y)|$.
	Therefore using \eqref{csineq} we get 
	\begin{align}
	\begin{split}
	I_1=&p \mathfrak{M}(\|u\|^p)\left(\int_{Q}|u(x)-u(y)|^{p-2}\{|u(x)-u(y)|^2-(u(x)-u(y))(v(x)-v(y))\}dxdy\right)\\
	 &+p \mathfrak{M}(\|v\|^p)\left(\int_{q}|v(x)-v(y)|^{p-2}\{|v(x)-v(y)|^2-(u(x)-u(y))(v(x)-v(y))\}dxdy\right)\\
	\geq &\frac{p}{2}  \mathfrak{M}(\|u\|^p)\left(\int_{Q}|u(x)-u(y)|^{p-2}\{|u(x)-u(y)|^2-|v(x)-v(y)|^2\}dxdy\right)\\
	 &+\frac{p}{2}  \mathfrak{M}(\|v\|^p)\left(\int_{Q}|v(x)-v(y)|^{p-2}\{|v(x)-v(y)|^2-|u(x)-u(y)|^2\}dxdy\right)\\
	\geq & p\mathfrak{M}(\|u\|^p)\left(\int_{Q}(|u(x)-u(y)|^{p-2}-|v(x)-v(y)|^{p-2})\right.\\
	&\left.(|u(x)-u(y)|^2-|v(x)-v(y)|^2)dxdy\right).
%	&\geq&p\mathfrak{m}_0\left(\int_{Q}(|u(x)-u(y)|^{p-2}-|v(x)-v(y)|^{p-2})\right.\\
%&\left.(|u(x)-u(y)|%^2-|v(x)-v(y)|^2)dxdy\right).
\end{split}
	\end{align}
	On the other hand, when $|u(x)-u(y)| \leq|v(x)-v(y)|$, we interchange the roles of $u$, $v$ to get 
	\begin{align}
	\begin{split}
	I_1\geq & p\mathfrak{M}(\|u\|^p)\left(\int_{Q}(|u(x)-u(y)|^{p-2}-|v(x)-v(y)|^{p-2})\right.\\
	&\left.(|u(x)-u(y)|^2-|v(x)-v(y)|^2)dxdy\right).
	\end{split}
	\end{align}
	Thus, we deduce that
	\begin{eqnarray}
	\langle \mathcal{M}'(u)-\mathcal{M}'(v),u-v\rangle&=&I_1\geq 0.
	\end{eqnarray}
	Thus $\mathfrak{M}(\cdot)(-\Delta_p)^s(\cdot)$ is a monotone operator. This monotonicity is sufficient for our work.\\
	Coming back to \eqref{compprinci1}, by the monotonicity of $\mathfrak{M}(\cdot)(-\Delta_p)^s(\cdot)$ thus proved implies that $v\geq u$ in $S$. Therefore $u=v$ in $S$ and hence $u\geq v$ a.e. in $\Omega$.
\end{proof}
\noindent We now recall three essential results due to \cite{Brasco2016} to obtain an $L^{\infty}(\bar{\Omega})$ bound. Consider the monotone increasing function $J_{p}(t):=|t|^{p-2}t$ for every $1<p<\infty$.
\begin{lemma}\label{beta convex}
	For every $\beta>0$ and $1\leq p<\infty$ we have
	$$\left(\frac{1}{\beta}\right)^{\frac{1}{p}}\left(\frac{p+\beta-1}{p}\right)\geq 1.$$
\end{lemma}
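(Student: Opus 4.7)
The plan is to reduce the claimed inequality to a clean form and then invoke Bernoulli's inequality. Raising both sides to the $p$-th power, the inequality
$$\left(\frac{1}{\beta}\right)^{\frac{1}{p}}\left(\frac{p+\beta-1}{p}\right)\geq 1$$
is equivalent to
$$\left(\frac{p+\beta-1}{p}\right)^{p}\geq \beta,\qquad\text{i.e.,}\qquad \left(1+\frac{\beta-1}{p}\right)^{p}\geq\beta.$$
So the entire lemma reduces to a single real inequality in the variable $\beta>0$ and the parameter $p\geq 1$.

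The key step is to apply Bernoulli's inequality in the form $(1+x)^{p}\geq 1+px$, valid for every real $x\geq -1$ and every $p\geq 1$. I would set $x=(\beta-1)/p$. Since $\beta>0$ and $p\geq 1$, one has $x>-1/p\geq -1$, so the hypothesis of Bernoulli is satisfied. Plugging in gives
$$\left(1+\frac{\beta-1}{p}\right)^{p}\geq 1+p\cdot\frac{\beta-1}{p}=1+(\beta-1)=\beta,$$
which is exactly what was needed. Taking the $p$-th root and multiplying by $\beta^{-1/p}$ returns us to the original statement, completing the proof.

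There is essentially no obstacle here beyond recognizing that the inequality is of Bernoulli type after multiplication by $p$ and factoring. For completeness, I would also note the case of equality: equality holds in Bernoulli's inequality precisely when $x=0$ (or $p=1$), so equality in the lemma occurs at $\beta=1$ (or trivially at $p=1$, where the left-hand side is identically $1$). As a sanity check one can verify the boundary case $p=1$ directly, where the left-hand side equals $\beta^{-1}\cdot\beta=1$.

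Alternatively, one could proceed by calculus: setting $f(\beta)=\beta^{-1/p}(p+\beta-1)/p$ and differentiating yields $f'(\beta)=\frac{p-1}{p^{2}}\beta^{-1/p-1}(\beta-1)$, so $\beta=1$ is the unique critical point for $p>1$, is a strict minimum by the sign of $f'$, and $f(1)=1$. This is a perfectly valid backup, but the Bernoulli route is shorter and avoids any case analysis on the sign of $p-1$.
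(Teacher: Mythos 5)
Your proof is correct. Note that the paper does not actually prove this lemma; it is stated as one of three results recalled from Brasco--Parini \cite{Brasco2016}, so there is no in-text argument to compare against. Your reduction to $\left(1+\frac{\beta-1}{p}\right)^{p}\geq\beta$ followed by Bernoulli's inequality with $x=(\beta-1)/p$ (which satisfies $x>-1/p\geq-1$ since $\beta>0$) is a complete, elementary, self-contained proof, and your identification of the equality cases $\beta=1$ and $p=1$ is accurate. The calculus alternative you sketch is also valid: the computation $f'(\beta)=\frac{p-1}{p^{2}}\beta^{-1/p-1}(\beta-1)$ checks out and shows $\beta=1$ is the global minimum with $f(1)=1$. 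Either route would serve as a proof the paper itself omits.
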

\begin{lemma}\label{l infty 1}
	Assume $1<p<\infty$ and $f: \mathbb{R}\rightarrow \mathbb{R}$ to be a $C^{1}$ convex function. Then for any $\tau\geq 0$
	\begin{equation}\label{bdd est1}
	J_{p}(a-b)\big[AJ_{p,\tau}(f'(a))-BJ_{p,\tau}(f'(b))\big]\geq(\tau(a-b)^{2}+(f(a)-f(b))^{2})^{\frac{p-2}{2}}(f(a)-f(b))(A-B),
	\end{equation}
	for every $a, b\in \mathbb{R}$ and every $A, B\geq 0$, where $J_{p,\tau}(t)=(\tau+|t|^{2})^{\frac{p-2}{2}}t,~ t\in \mathbb{R}.$ Moreover, for $\tau=0$, we get
	\begin{equation}\label{bdd est1 remark}
	J_{p}(a-b)\big[AJ_{p}(f'(a))-BJ_{p}(f'(b))\big]\geq(f(a)-f(b))^{p-2}(f(a)-f(b))(A-B),
	\end{equation}
	for every $a, b\in \mathbb{R}$ and every $A, B\geq 0$
\end{lemma}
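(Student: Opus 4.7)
My plan is to reduce the claimed pointwise inequality to an elementary monotonicity statement for the scalar function $J_{p,\tau}(t)=(\tau+t^{2})^{(p-2)/2}t$. First, observe that \eqref{bdd est1} is invariant under the simultaneous swap $(a,A)\leftrightarrow(b,B)$, so I may assume $a\ge b$ without loss of generality; the boundary case $a=b$ makes both sides vanish (under the usual convention in the subquadratic regime $1<p<2$ with $\tau=0$), so I focus on $a>b$. Set $x=a-b>0$, $y=f(a)-f(b)$, $\alpha=f'(a)$, $\beta=f'(b)$. Convexity of $f$ gives that $f'$ is nondecreasing, hence $\alpha\ge\beta$, and the mean value theorem produces some $\xi\in[b,a]$ with $y=f'(\xi)x$, so that $\gamma:=y/x\in[\beta,\alpha]$.

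The decisive step is to factor $x^{p-1}$ out of both sides of \eqref{bdd est1}. A direct computation gives $(\tau x^{2}+y^{2})^{(p-2)/2}y=x^{p-1}J_{p,\tau}(\gamma)$ and $J_p(a-b)=x^{p-1}$, so after dividing through by $x^{p-1}>0$ the claim reduces to
\[
A\,J_{p,\tau}(\alpha)-B\,J_{p,\tau}(\beta)\;\ge\;J_{p,\tau}(\gamma)\,(A-B),
\]
which rearranges to $A[J_{p,\tau}(\alpha)-J_{p,\tau}(\gamma)]\ge B[J_{p,\tau}(\beta)-J_{p,\tau}(\gamma)]$. A one-line differentiation yields $J_{p,\tau}'(t)=(\tau+t^{2})^{(p-4)/2}[(p-1)t^{2}+\tau]\ge 0$ for all $t\in\mathbb{R}$ (since $p>1$ and $\tau\ge 0$), so $J_{p,\tau}$ is nondecreasing. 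Combined with $\beta\le\gamma\le\alpha$ and $A,B\ge 0$, the left-hand side of the rearranged inequality is nonnegative while the right-hand side is nonpositive, which closes the argument. The specialization \eqref{bdd est1 remark} at $\tau=0$ follows by the same reasoning since $J_{p,0}\equiv J_{p}$.

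The main obstacle, as I see it, is recognizing the decisive reduction: the prefactor $x^{p-1}$ cancels cleanly only because the exponent on the right-hand side is $(p-2)/2$ applied to the quadratic form $\tau x^{2}+y^{2}$, and spotting this identity is what collapses a seemingly two-variable algebraic inequality into a one-line monotonicity check once the mean value theorem has located $\gamma$ inside $[\beta,\alpha]$. A secondary technicality is the subquadratic case $1<p<2$ at $a=b$, where the right-hand side has the indeterminate form $0^{p-2}\cdot 0$; this is resolved by interpreting the inequality via the limit $a\to b^{+}$, which is consistent with the rest of the proof.
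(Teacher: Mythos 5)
Your proof is correct. The paper itself gives no argument for this lemma --- it is quoted verbatim from the cited reference of Brasco and Parini --- and your reduction is essentially the standard one found there: after factoring $x^{p-1}=J_p(a-b)$ out of both sides, the inequality collapses to $A\bigl[J_{p,\tau}(f'(a))-J_{p,\tau}(\gamma)\bigr]\ge B\bigl[J_{p,\tau}(f'(b))-J_{p,\tau}(\gamma)\bigr]$ with $\gamma=\frac{f(a)-f(b)}{a-b}\in[f'(b),f'(a)]$, and the monotonicity of $J_{p,\tau}$ finishes it. The only cosmetic difference is that the reference obtains $f'(b)(a-b)\le f(a)-f(b)\le f'(a)(a-b)$ directly from convexity rather than via the mean value theorem; both routes are valid under the stated $C^1$ hypothesis, and your handling of the degenerate cases ($a=b$, and $\tau=0$ with $1<p<2$) is sound.
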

\begin{lemma}\label{l infty 2}
	Assume $1<p<\infty$ and $h:\mathbb{R}\rightarrow \mathbb{R}$ to be an increasing function. Define
	$$G(t)=\int_{0}^{t}h'(\tau)^{\frac{1}{p}}d\tau, t\in \mathbb{R},$$
	then we have
	\begin{equation}\label{bdd est2}
	J_{p}(a-b)(h(a)-h(b))\geq|h(a)-h(b)|^{p}.
	\end{equation}
\end{lemma}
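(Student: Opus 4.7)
The natural approach is to represent both sides as integrals and apply H\"older's inequality. Since $h$ is increasing, $(a-b)$ and $h(a)-h(b)$ carry the same sign, so $J_p(a-b)(h(a)-h(b))=|a-b|^{p-1}|h(a)-h(b)|$; without loss of generality take $a>b$. I would write
\[
G(a)-G(b)=\int_b^a h'(\tau)^{1/p}\cdot 1\,\d\tau,
\]
and apply H\"older's inequality with conjugate exponents $p$ and $p/(p-1)$ to obtain the pointwise bound
\[
|G(a)-G(b)|^{p}\;\leq\;(a-b)^{p-1}\!\int_b^a h'(\tau)\,\d\tau\;=\;(a-b)^{p-1}(h(a)-h(b))\;=\;J_p(a-b)(h(a)-h(b)).
\]
This one-line estimate requires only monotonicity of $h$, and it is the natural role of the auxiliary function $G$ defined in the statement: it is precisely the quantity to which the fractional Sobolev embedding is applied in the Moser iteration that this Lemma is designed to feed.

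The main obstacle is bridging from the intermediate bound $|G(a)-G(b)|^{p}$ to the stated right-hand side $|h(a)-h(b)|^{p}$. This is equivalent to proving $|G(a)-G(b)|\geq|h(a)-h(b)|$, i.e.\
\[
\int_b^a\bigl(h'(\tau)^{1/p}-h'(\tau)\bigr)\,\d\tau\;\geq\;0.
\]
The integrand is nonnegative precisely when $h'(\tau)\leq 1$ a.e., and fails otherwise (for example, $h(t)=2t$ gives $|h(a)-h(b)|=2|a-b|$ while $|G(a)-G(b)|=2^{1/p}|a-b|$, so the desired direction is reversed). Hence the statement as written cannot be established for an arbitrary increasing $h$ without an additional Lipschitz-type hypothesis, and comparison with the source result in \cite{Brasco2016} strongly suggests that the intended right-hand side should be $|G(a)-G(b)|^{p}$ rather than $|h(a)-h(b)|^{p}$.

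Accordingly, my concrete plan is twofold. First, carry out the H\"older step above to establish $J_p(a-b)(h(a)-h(b))\geq|G(a)-G(b)|^{p}$ under the stated hypotheses, which is sharp when $h$ is affine. Second, to honour the statement as literally worded, I would attempt to insert a monotone comparison $h'\leq 1$ (or replace the general $h$ by a specific truncation/convex combination used in Moser iteration for which this holds), and check that the subsequent regularity argument of the paper only ever invokes the $|G(a)-G(b)|^{p}$ form. The hard part -- indeed, the point where I expect the proof to break if the Lipschitz-type assumption is not supplied -- is precisely the comparison $|G(a)-G(b)|\geq|h(a)-h(b)|$, which is the sole missing link between the H\"older estimate and the claimed inequality.
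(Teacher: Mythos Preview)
Your diagnosis is correct. The paper does not prove this lemma at all: it is merely quoted from \cite{Brasco2016}, and in that source the right-hand side is $|G(a)-G(b)|^{p}$, not $|h(a)-h(b)|^{p}$. So the displayed inequality \eqref{bdd est2} is a transcription error, and your counterexample $h(t)=2t$ shows that the literal statement is false for every $p>1$.

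Your H\"older argument is exactly the standard proof of the corrected inequality: with $a>b$,
\[
|G(a)-G(b)|=\int_b^a h'(\tau)^{1/p}\,\d\tau\le\Bigl(\int_b^a h'(\tau)\,\d\tau\Bigr)^{1/p}(a-b)^{(p-1)/p},
\]
and raising to the $p$-th power gives $|G(a)-G(b)|^{p}\le (a-b)^{p-1}(h(a)-h(b))=J_p(a-b)(h(a)-h(b))$. This is precisely how Brasco--Parini argue. You are also right that the only place the paper invokes the lemma, namely inequality \eqref{bound est 4} with $h(u)=(u_k+\delta)^{\beta}$, uses the $G$-form: the function $(u_k+\delta)^{(\beta+p-1)/p}$ appearing there is (up to a multiplicative constant absorbed into the factor $\bigl(\tfrac{\beta+p-1}{p}\bigr)^p\beta^{-1}$) exactly $G(u)$, not $h(u)$. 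Hence the downstream Moser iteration is unaffected by the typo, and there is nothing further to salvage.
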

\noindent We now prove the uniform boundedness of solutions to the problem \eqref{main p}. The proof is based on the Moser iteration technique.
\begin{lemma}\label{bounded}
	Let $u\in X_0$ be a positive weak solution to the problem in \eqref{main p}, then $u\in L^{\infty}(\bar{\Omega}).$
\end{lemma}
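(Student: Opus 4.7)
The plan is Moser iteration, with the singular term being the easy part and the critical exponent $p_s^*$ being the main obstacle: naive Moser fails at criticality, and the remedy is a truncation-absorption trick that exploits the a priori integrability $u \in L^{p_s^*}(\Omega)$.

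Fix $\beta \ge 1$ and $M > 0$, and set $u_M := \min\{u, M\}$. The first step is to test the weak formulation of \eqref{main p} with $\varphi := u\, u_M^{p(\beta-1)} \in X_0$, which is admissible since $u_M \in L^{\infty}$ and since $\varphi u^{-\gamma} = u^{1-\gamma} u_M^{p(\beta-1)} \in L^1(\Omega)$ because $1 - \gamma > 0$. To obtain a usable lower bound on the nonlocal left-hand side, one applies Lemmas \ref{beta convex}, \ref{l infty 1}, \ref{l infty 2} with $h(t) := t\min\{t, M\}^{p(\beta-1)}$, together with $\mathfrak{M}(\|u\|^p) \ge \mathfrak{m}_0$ and the Sobolev inequality \eqref{sobolev const}. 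The outcome, after a standard algebraic manipulation, is an estimate of the form
\begin{equation*}
C(\beta)\, \mathfrak{m}_0\, S \left( \int_\Omega \bigl( u u_M^{\beta - 1} \bigr)^{p_s^*} dx \right)^{p/p_s^*} \le \lambda \int_\Omega u^{1-\gamma} u_M^{p(\beta - 1)} dx + \int_\Omega u^{p_s^*} u_M^{p(\beta - 1)} dx,
\end{equation*}
with $C(\beta) > 0$ independent of $M$.

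The singular term is easy: since $1 - \gamma \in (0,1)$ and $u_M \le u$, the elementary bound $u^{1-\gamma} u_M^{p(\beta-1)} \le 1 + u^{p\beta}$ (valid because $p\beta - p + 1 - \gamma \in (0, p\beta)$ for $\beta \ge 1$) controls this contribution by $\lambda\bigl(|\Omega| + \|u\|_{L^{p\beta}(\Omega)}^{p\beta}\bigr)$. The critical term is the real obstruction, and the plan is to rewrite $u^{p_s^*} u_M^{p(\beta-1)} = u^{p_s^* - p} (u u_M^{\beta - 1})^p$ and split the integration over $\{u \le K\}$ and $\{u > K\}$. On the first set, $u^{p_s^* - p} \le K^{p_s^* - p}$ yields a term proportional to $\int_\Omega (u u_M^{\beta-1})^p$; on the second, H\"older's inequality with exponents $p_s^*/(p_s^* - p)$ and $p_s^*/p$ gives
\begin{equation*}
\int_{\{u > K\}} u^{p_s^*} u_M^{p(\beta-1)} dx \le \left( \int_{\{u > K\}} u^{p_s^*} dx \right)^{(p_s^* - p)/p_s^*} \left( \int_\Omega (u u_M^{\beta-1})^{p_s^*} dx \right)^{p/p_s^*}.
\end{equation*}
Because $u \in L^{p_s^*}(\Omega)$, the first factor can be made smaller than $C(\beta)\mathfrak{m}_0 S / 2$ by choosing $K = K(\beta)$ large enough, and the resulting $L^{p_s^*}$ term is then absorbed into the left-hand side.

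After absorption and passing $M \to \infty$ by monotone convergence, one arrives at a Moser-type inequality of the schematic form
\begin{equation*}
\|u\|_{L^{\beta p_s^*}(\Omega)}^{p\beta} \le \Psi(\beta) \left( 1 + \|u\|_{L^{\beta p}(\Omega)}^{p\beta} \right),
\end{equation*}
with an explicit $\Psi(\beta) > 0$. Starting from $u \in L^{p_s^*}(\Omega) = L^{\beta_0 p}$ with $\beta_0 := p_s^*/p > 1$ and iterating $\beta_{k+1} = (p_s^*/p)\beta_k$, one concludes $u \in L^q(\Omega)$ for every $q < \infty$. A careful bookkeeping of $\Psi(\beta_k)$ in the usual Moser scheme then yields the uniform bound $\|u\|_{L^{\infty}(\bar{\Omega})} < \infty$. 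I expect the critical-term absorption to be the delicate point; once that is in place, the remainder of the iteration is textbook.
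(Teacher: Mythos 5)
Your proposal is correct in outline but follows a genuinely different route from the paper. The paper first replaces $u$ by $|u|$ via the regularized modulus $g_{\epsilon}(t)=(\epsilon^{2}+t^{2})^{1/2}$, then tests with $\psi=(u_k+\delta)^{\beta}-\delta^{\beta}$ where $u_k=\min\{(u-1)^{+},k\}$, uses the Brasco--Parini convexity lemmas to produce $\big\|(u_k+\delta)^{(\beta+p-1)/p}\big\|_{p_s^*}^{p}$ on the left, and handles the right-hand side by a single H\"older step that extracts the full factor $\|u\|_{p_s^*}^{r-1}$ as a fixed constant, leaving $\|(u_k+\delta)^{\beta}\|_{q}$ with $q=p_s^*/(p_s^*-r+1)$ and iteration gain $\eta=p_s^*/(pq)$. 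That device is cleanest for subcritical $r$, since the gain $\eta=(p_s^*-r+1)/p$ degenerates as $r\uparrow p_s^*$; your truncation--absorption trick (splitting $\{u\le K\}\cup\{u>K\}$ and using absolute continuity of $\int_{\Omega}u^{p_s^*}$ to absorb the supercritical piece into the left-hand side) is the standard Brezis--Kato-type remedy and attacks the critical exponent head-on, which is a real advantage of your approach. The one step you should not wave away is the very last one: the threshold $K=K(\beta)$ is chosen so that $\big(\int_{\{u>K\}}u^{p_s^*}\big)^{(p_s^*-p)/p_s^*}\le C(\beta)\mathfrak{m}_0 S/2$, and since $C(\beta)$ decays polynomially in $\beta$, mere qualitative smallness gives no control on $K(\beta_k)$, hence none on $\Psi(\beta_k)$, and the infinite product $\prod_k\Psi(\beta_k)^{1/(p\beta_k)}$ need not converge. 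The fix is a two-pass argument: the first pass of your scheme already yields $u\in L^{q_0}(\Omega)$ for some $q_0>p_s^*$, after which Chebyshev gives $\int_{\{u>K\}}u^{p_s^*}\le K^{p_s^*-q_0}\|u\|_{q_0}^{q_0}$, so $K(\beta)$ grows only polynomially in $\beta$, $\Psi(\beta_k)$ is polynomially bounded, and the usual geometric growth of $\beta_k$ makes the product converge. With that bookkeeping made explicit your argument is complete.
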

\begin{proof}
	We will prove this Lemma by obtaining a more general result for any $r\in(p-1,p_s^*]$ in place of the critical exponent, $p_s^*$. The arguments of the proof is taken from the celebrated article of \cite{Brasco2016} with appropriate modifications.	We will proceed with the smooth, convex and Lipschitz function $g_{\varepsilon}(t)=(\varepsilon^2+t^2)^{\frac{1}{2}}$ for every $\varepsilon>0.$ Moreover, $g_{\varepsilon}(t)\rightarrow|t|$ as $t\rightarrow0$ and $|g'_{\varepsilon}(t)|\leq1.$ Let $0<\psi\in C_c^{\infty}(\Omega)$. Choose $\varphi=\psi |g'_{\varepsilon}(u)|^{p-2}g'_{\varepsilon}(u)$ as the test function in \eqref{weak p} with exponent $r\in(p-1,p_s^*]$ in place of $p_s^*$. Now the following estimate follows immediately by putting $a=u(x), b=u(y), A=\psi(x)$ and $B=\psi(y)$ in Lemma \ref{l infty 1}. For all $\psi\in C_c^{\infty}(\Omega)\cap\mathbb{R^+}$, we obtain
	\begin{align}\label{bound est 2.0}
	\mathfrak{M}(\|u\|^p)\int_{Q}&\cfrac{|g_{\varepsilon}(u(x))-g_{\varepsilon}(u(y))|^{p-2}(g_{\varepsilon}(u(x))-g_{\varepsilon}(u(y)))(\psi(x)-\psi(y))}{|x-y|^{N+sp}}dxdy\nonumber\\
	&\leq\int_\Omega\left(\left|\frac{\lambda}{|u|^{\gamma-1}u}+|u|^{r-2}u\right|\right)|g_{\varepsilon}(u)|^{p-1}\psi dx
	\end{align}
	Now on passing to the limit $\varepsilon\rightarrow0$ together with Fatou's Lemma, we obtain
	\begin{align}\label{bound est 2}
	\begin{split}
	\mathfrak{M}(\|u\|^p)\int_{Q}&\cfrac{||u(x)|-|u(y)||^{p-2}(|u(x)|-|u(y)|)(\psi(x)-\psi(y))}{|x-y|^{N+sp}} dxdy\\
	\leq &\int_\Omega\left(\left|\frac{\lambda}{|u|^{\gamma-1}u}+|u|^{r-2}u\right|\right)\psi dx
	\end{split}
	\end{align}
	Note that \eqref{bound est 2} is true for every $\psi\in X_0$. Define $u_k=\min\{(u-1)^+, k\}\in X_0$ for each $k>0$. Let $\beta>0$ and $\delta>0$ be given. Putting $\psi=(u_k+\delta)^{\beta}-\delta^{\beta}$ in \eqref{bound est 2} we get
	\begin{align*}
	\mathfrak{M}(\|u\|^p)\int_{Q}&\cfrac{||u(x)|-|u(y)||^{p-2}(|u(x)|-|u(y)|)((u_k(x)+\delta)^{\beta}-(u_k(y)+\delta)^{\beta})}{|x-y|^{N+sp}}dxdy\\
	&\leq\int_\Omega\left|\frac{\lambda}{|u|^{\gamma-1}u}+|u|^{r-2}u\right|((u_k+\delta)^{\beta}-\delta^{\beta}) dx
	\end{align*}
	On setting $h(u)=(u_k+\delta)^{\beta}$ in Lemma \ref{l infty 2}, we obtain
	\begin{align*}
	&\mathfrak{M}(\|u\|^p)\int_{Q}\cfrac{|((u_k(x)+\delta)^{\frac{\beta+p-1}{p}}
		-(u_k(y)+\delta)^{\frac{\beta+p-1}{p}})|^{p}}{|x-y|^{N+sp}}dxdy\nonumber\\
	&\leq\mathfrak{M}(\|u\|^p)\int_{Q}\left(\cfrac{(\beta+p-1)^{p}}{{\beta}p^{p}}\right)\cfrac{||u(x)|-|u(y)||^{p-2}(|u(x)|-|u(y)|)((u_k(x)+\delta)^{\beta}-(u_k(y)+\delta)^{\beta})}{|x-y|^{N+sp}}dxdy\nonumber\\
	&\leq\left(\cfrac{(\beta+p-1)^{p}}{\beta p^{p}}\right)\mathfrak{M}(\|u\|^p) \int_{Q}\cfrac{||u(x)|-|u(y)||^{p-2}(|u(x)|-|u(y)|)((u_k(x)+\delta)^{\beta}-(u_k(y)+\delta)^{\beta})}{|x-y|^{N+sp}}dxdy\nonumber\\
	&\leq\left(\cfrac{(\beta+p-1)^{p}}{\beta{p}^{p}}\right)\int_{\Omega}\left(\left|\frac{\lambda}{|u|^{\gamma-1}u}\right|+||u|^{r-2}u|\right)\left((u_k+\delta)^{\beta}-\delta^{\beta}\right) dx\nonumber
	\end{align*}
	\begin{align}\label{bound est 4}
	&=\left(\cfrac{(\beta+p-1)^{p}}{\beta{p}^{p}}\right)\left[\int_{\{u\geq1\}}\lambda|u|^{-\gamma}\left((u_k+\delta)^{\beta}-\delta^{\beta}\right)+\int_{\{u\geq1\}}|u|^{r-1}\left((u_k+\delta)^{\beta}-\delta^{\beta}\right) dx\right]\nonumber\\
	&\leq{C}\left(\cfrac{(\beta+p-1)^{p}}{\beta{p}^{p}}\right)\left[\int_{\{u\geq1\}}\left(1+|u|^{r-1}\right)\left((u_k+\delta)^{\beta}-\delta^{\beta}\right) dx\right]\nonumber\\
	&\leq{2C}\left(\cfrac{(\beta+p-1)^{p}}{\beta{p}^{p}}\right)\left[\int_{\Omega}|u|^{r-1}\left((u_k+\delta)^{\beta}-\delta^{\beta}\right) dx\right]\nonumber\\
	&\leq{C'}\left(\cfrac{(\beta+p-1)^{p}}{\beta{p}^{p}}\right){\|u\|^{r-1}_{p_s^*}}\|(u_k+\delta)^{\beta}\|_{q},
	\end{align}
	where $q=\frac{p_s^*}{p_s^*-r+1}$ and $C=\max\{1,|\lambda|\}.$ From the Sobolev inequality \cite{Nezza2012} we get
	\begin{align}\label{bound est 5}
	&\mathfrak{M}(\|u\|^p)\int_{Q}\cfrac{|((u_k(x)+\delta)^{\frac{\beta+p-1}{p}}
		-(u_k(y)+\delta)^{\frac{\beta+p-1}{p}})|^{p}}{|x-y|^{N+sp}}dxdy\geq{C}\mathfrak{m}_0\left\|(u_k+\delta)^{\frac{\beta+p-1}{p}}-\delta^{\frac{\beta+p-1}{p}}\right\|_{p_{s}^*}^{p}
	\end{align}
	where $C>0$. Again from triangle inequality and $(u_k+\delta)^{\beta+p-1}\geq\delta^{p-1}(u_k+\delta)^{\beta}$ we have
	\begin{align}\label{bound est 6}
	\left[\int_{\Omega}\left((u_k+\delta)^{\frac{\beta+p-1}{p}}
	-\delta^{\frac{\beta+p-1}{p}}\right)^{p_s^*}dx\right]^{\cfrac{p}{p_s^*}}\geq\left(\frac{\delta}{2}\right)^{p-1}&\left[\int_{\Omega}(u_k+\delta)^{\frac{p_s^*\beta}{p}}\right]^{\cfrac{p}{p_s^*}}-\delta^{\beta+p-1}|\Omega|^{\cfrac{p}{p_s^*}}.
	\end{align}
	Therefore, by using \eqref{bound est 6} in \eqref{bound est 5} and then from \eqref{bound est 4}, we obtain
	\begin{align}\label{bdd1}
	\left\|(u_k+\delta)^{\frac{\beta}{p}}\right\|^{p}_{p_s^*}
	\leq\frac{C'}{\mathfrak{m}_0}\left[C\left(\frac{2}{\delta}\right)^{p-1}\left(\cfrac{(\beta+p-1)^{p}}{\beta{p}^{p}}\right)\|u\|_{p_s^*}^{r-1}\|(u_k+\delta)^{\beta}\|_{q}+\delta^{\beta}|\Omega|^{\cfrac{p}{p_s^*}}\right].
	\end{align}
	Therefore, on using \eqref{bdd1}, Lemma \ref{beta convex} and \eqref{bound est 4}, we can deduce that
	\begin{align}\label{bound est 7}
	\left\|(u_k+\delta)^{\frac{\beta}{p}}\right\|^{p}_{p_s^*}
	&\leq\frac{C'}{\mathfrak{m}_0}\left[\frac{1}{\beta}\left(\cfrac{\beta+p-1}{p}\right)^{p}\left\|(u_k+\delta)^{\beta}\right\|_{q}\left(\frac{C\|u\|_{p_s^*}^{r-1}}{\delta^{p-1}}+|\Omega|^{\cfrac{p}{p_s^*}-\cfrac{1}{q}} \right)\right].
	\end{align}	
	\noindent Now choose, $\delta>0$ such that $\delta^{p-1}=C\|u\|_{p_s^*}^{r-1}\left(|\Omega|^{\frac{p}{p_s^*}-\frac{1}{q}}\right)^{-1}$ and $\beta\geq1$ with $\left(\frac{\beta+p-1}{p}\right)^{p}\leq\beta^{p}.$ Further, by setting $\eta=\cfrac{p_s^*}{pq}>1$ and $\tau=q\beta$ we can rewrite the inequality \eqref{bound est 7} as
	\begin{align}\label{bound est 8}
	\left\|(u_k+\delta)\right\|_{\eta\tau}\leq\left(C|\Omega|^{\frac{p}{p_s^*}-\frac{1}{q}}\right)^{\frac{q}{\tau}}\left(\frac{\tau}{q}\right)^{\frac{q}{\tau}}\left\|(u_k+\delta)\right\|_{\tau}.
	\end{align}
	Set $\tau_0=q$ and $\tau_{m+1}=\eta\tau_m=\eta^{m+1}q$. Then after performing $m$ iterations, the inequality \eqref{bound est 8} reduces to
	\begin{align}\label{bound est 9}
	\left\|(u_k+\delta)\right\|_{\tau_{m+1}}&\leq\left(C|\Omega|^{\frac{p}{p_s^*}-\frac{1}{q}}\right)^{\left(\sum\limits_{i=0}^{m}\frac{q}{\tau_i}\right)}\left(\prod\limits_{i=0}^{m}\left(\frac{\tau_i}{q}\right)^{\frac{q}{\tau_i}}\right)^{p-1}\left\|(u_k+\delta)\right\|_{q}\nonumber\\
	&=\left(C|\Omega|^{\frac{p}{p_s^*}-\frac{1}{q}}\right)^{\frac{\eta}{\eta-1}}\left(\eta^{\frac{\eta}{(\eta-1)^2}}\right)^{p-1}\left\|(u_k+\delta)\right\|_{q}
	\end{align}
	Therefore, on passing the limit as $m\rightarrow\infty$, we get
	\begin{equation}\label{bound est 10}
	\left\|u_k\right\|_{\infty}\leq\left(C|\Omega|^{\frac{p}{p_s^*}-\frac{1}{q}}\right)^{\frac{\eta}{\eta-1}}\left(C'\eta^{\frac{\eta}{(\eta-1)^2}}\right)^{p-1}\left\|(u_k+\delta)\right\|_{q}.
	\end{equation}
	 Furthermore, by applying the triangle inequality together with the fact $u_k\leq(u-1)^+$ in \eqref{bound est 10} and then letting $k\rightarrow\infty$, we obtain
	\begin{equation}\label{bound est 11}
	\left\|(u-1)^+\right\|_{\infty}\leq\left\|u_k\right\|_{\infty}\leq{C}\left(\eta^{\frac{\eta}{(\eta-1)^2}}\right)^{p-1}\left(|\Omega|^{\frac{p}{p_s^*}-\frac{1}{q}}\right)^{\frac{\eta}{\eta-1}}\left(\left\|(u-1)^+\right\|_q+\delta|\Omega|^{\frac{1}{q}}\right)
	\end{equation}
	Hence, we have $u\in L^{\infty}({\Omega}).$ In particular, by choosing $r=p_s^*$, we conclude the that if $u\in X_0$ is a solution to \eqref{main p}, then $u\in L^{\infty}(\bar{\Omega}).$
\end{proof}
\section*{Appendix}

We now prove the existence of the first eigenvalue and eigenfunction for the fractional $p$-Kirchhoff equation in the form of the following Lemma.
\begin{lemma}\label{first_eig_value}
{Let $(\lambda_1,\phi_1)$ be the first eigenpair to the problem
\begin{align}\label{p-lap-eig-val-1}
\begin{split}
(-\Delta)_p^s\phi_1=&\lambda_1|\phi_1|^{p-2}\phi_1,~\text{in}~\Omega\\
\phi_1>&0,~\text{in}~\Omega\\
\phi_1=&0,~\text{in}~\mathbb{R}^N\setminus\Omega.
\end{split}
\end{align}
Then $(\Lambda_1,u_1)$ is the first
eigenpair of 
\begin{align}\label{p-lap-eig-val-2}
\begin{split}
\mathfrak{M}(\|u\|^p)(-\Delta)_p^su_1=&\lambda_1|u_1|^{p-2}u_1,~\text{in}~\Omega\\
u_1>&0,~\text{in}~\Omega\\
u_1=&0,~\text{in}~\mathbb{R}^N\setminus\Omega.
\end{split}
\end{align} 
where $\Lambda_1=\mathfrak{M}(t^p\|\phi_1\|^p)\lambda_1$, $u_1=t\phi_1$ for some $t>0$.}
\end{lemma}
\begin{proof}
Taking $u_1=t\phi_1$, $t > 0$ in \eqref{p-lap-eig-val-1} yields that $\Lambda_1=\mathfrak{M}(t^p\|\phi_1\|^p)\lambda_1$. This completes the proof.
\end{proof}

\begin{remark}\label{dj1}
{
\begin{enumerate}
\item The function $u_1\in L^{\infty}(\Omega)$ since $\phi_1\in L^{\infty}(\Omega)$. 
\item Let $\underline{u}$ be the solution to the problem \eqref{squasinna}. For $\epsilon>0$ sufficiently small, consider 
\begin{align}\label{dj2}
\begin{split}
\mathfrak{M}(\|\epsilon u_1\|^p)(-\Delta)_p^s(\epsilon u_1)-\lambda\epsilon^{-\gamma}{u_1}^{-\gamma}<0=\mathfrak{M}(\|\underline{u}\|^p)(-\Delta)_p^s(\underline{u})-\lambda {\underline{u}}^{-\gamma}.
\end{split}
\end{align}
By the Lemma \ref{weak comparison} we have $\underline{u}\geq\epsilon u_1$.
\end{enumerate}}
\end{remark}
%\textcolor{red}{It may be observed from \cite{Saoudi2019} that $c_2 %d(x,\partial\Omega)\geq\phi_1\geq c_1 d(x,\partial\Omega)$ and hence %$\underline{u}\geq\epsilon u_1$.}

%\begin{remark}\label{cut-off-fnal-pf}
%	Let $A=\{x\in\Omega:\underline{u}(x)-u(x)>0\}$. Then we have
%	\begin{align}\label{simon_use_0}
%	\begin{split}
%	\langle \mathfrak{M}(\|u\|^p)(-\Delta)_p^su-\mathfrak{M}(\|\underline{u}\|^p)(-\Delta)_p^s\underline{u},\underline{u}-u\rangle_A&\geq 0~\text{by the Simon's inequality}
%	\end{split}
%	\end{align}
%	where $\langle\cdot,\cdot\rangle_A$ denotes the integration to be over the measurable subset $A$ of $\Omega$. Thus, if
%	\begin{align}\label{simon_use_1}
%	\begin{split} \mathfrak{M}(\|u\|^p)(-\Delta)_p^su&\geq\mathfrak{M}(\|\underline{u}\|^p)(-\Delta)_p^s\underline{u}~\text{in}~A.
%	\end{split}
%	\end{align}

%\end{remark}

\section*{Data Availability statement}
\noindent The article has no data associated to it whatsoever.
\section*{Conflict of interest statement}
\noindent There is no conflict of interest whatsoever.
\section*{Acknowledgement}
\noindent The author S. Ghosh thanks the Council of Scientific and Industrial Research (CSIR), India, for the financial assistantship received to carry out this research work. D. Choudhuri thanks the National Board for Higher Mathematics (N.B.H.M.), Department of Atomic Energy (DAE) India, [02011/47/2021/NBHM(R.P.)/R\&D II/2615] India,to carry out the research. A.\,Fiscella is member of the {Gruppo Nazionale per l'Analisi Ma\-tema\-tica, la Probabilit\`a e
	le loro Applicazioni} (GNAMPA) of the {Istituto Nazionale di Alta Matematica ``G. Severi"} (INdAM).
A.\,Fiscella realized the manuscript within the auspices of the FAPESP Thematic Project titled ``Systems and partial differential equations" (2019/02512-5).


\begin{thebibliography}{99}
	%Type = Article
	\bibitem{Alves2005}
	{C.~O. Alves}, {F.~J. S.~A. Corr\^{e}a},
	{T.~F. Ma},
	{Positive solutions for a quasilinear elliptic
		equation of {K}irchhoff type},
	 {Computers \& Mathematics with Applications}, 49, ({2005}), 
	{85--93}.
	
	\bibitem{ahmed1} {A. Bensedik}, {On existence results for an anisotropic elliptic euqtaions of Kirchhoff-type by a monotonicity method}, {Funkcialaj Ekvacioj}, 57, ({2014}), {489--502}.
	%Type = Article
	\bibitem{Boccardo2009}
	{L.~Boccardo}, {L.~Orsina},
	 {Semilinear elliptic equations with singular
		nonlinearities},
	{Calculus of Variations and Partial Differential
		Equations}, {37}, ({2009}), 
	{363--380}.
	%Type = Article
	\bibitem{Brasco2016}
	{L.~Brasco}, {E.~Parini},
	 {The second eigenvalue of the fractional p-laplacian},
	 {Advances in Calculus of Variations}, 
	{9}, ({2016}).
	%Type = Incollection
	\bibitem{Caffarelli2012}
	{L.~Caffarelli},
	 {Non-local diffusions, drifts and games},
	{Nonlinear partial differential equations},
	7 of \textit{series Abel Symp.},
	{Springer, Heidelberg}, 2012, pp.
{37--52}. 
	%Type = Article
	\bibitem{Caffarelli2010}
	{L.~Caffarelli}, {J.M. Roquejoffre},
	{O.~Savin},
	 {Nonlocal minimal surfaces},
	Communications on Pure and Applied Mathematics,
	{63}, (2010), 1111--1144.
	%Type = Article
	\bibitem{Canino2017}
	{A.~Canino, L.~Montoro},
	{B.~Sciunzi}, {M.~Squassina},
	{Nonlocal problems with singular nonlinearity},
	{Bulletin des Sciences Math{\'{e}}matiques}
	, {141}, ({2017}), {223--250}.
	%Type = Article
	\bibitem{Choudhuri2021}
	{D.~Choudhuri},
	 {Existence and {H}\"{older} regularity of infinitely many solutions to a $p$-{K}irchhoff type problem involving a singular and a superlinear nonlinearity without the {A}mbrosetti-{R}abinowitz ({AR}) condition},
	 {Z. Angew. Math. Phys.}
	, {72}, ({2021}), {Art ID: 36, 26 pp}.
	%Type = Article
	\bibitem{Crandall1977}
	{M.~G. Crandall}, {P.~H. Rabinowitz},
	{L.~Tartar},
	 {On a {D}irichlet problem with a singular
		nonlinearity},
	 {Communications in Partial Differential Equations}
	, {2} ({1977}), {193--222}.
%Type = Article
	\bibitem{Nezza2012}
	{E.~D. Nezza}, {G.~Palatucci},
	{E.~Valdinoci},
	{Hitchhiker's guide to the fractional sobolev spaces},
	{Bulletin des Sciences Math{\'{e}}matiques}
	{136} ({2012}), {521--573}.
	%Type = Article
	\bibitem{Figueiredo2013}
	{G.~M. Figueiredo},
 {Existence and multiplicity of solutions for a class
		of {$p{\&}q$} elliptic problems with critical exponent},
 {Mathematische Nachrichten}, {286}
	({2013}) , {1129--1141}.
	%Type = Article
	\bibitem{Fiscella2019}
	{A.~Fiscella},
 {A fractional Kirchhoff problem involving a singular term and a critical nonlinearity},
 {Adv. Nonlinear Anal.}, {8}, ({2019}), {645--660}.
	%Type = Article
	\bibitem{Fiscella2016}
	{A.~Fiscella},
	 {Infinitely many solutions for a critical {K}irchhoff
		type problem involving a fractional operator},
	 {Differential and Integral Equations}, {29}, 
	({2016}), {513--530}.
	%Type = Article
	\bibitem{Fiscellapk2019}
	{A.~Fiscella}, {P.K.~Mishra},
 {The Nehari manifold for fractional Kirchhoff problems involving singular and critical terms},
 {Nonlinear Anal.}, {186}
	({2019}), {6--32}.
	%Type = Article
	\bibitem{Fiscella2014}
	{A.~Fiscella}, {E.~Valdinoci},
	{A critical {K}irchhoff type problem involving a
		nonlocal operator},
	{Nonlinear Analysis. Theory, Methods \&
		Applications}, 
	{94}, ({2014}), {156--170}.
	\bibitem{Garcia1991}
	{J.~Garc\'{i}a~Azorero}, {I.~Peral~Alonso},
 {Multiplicity of solutions for elliptic problems with
		critical exponent or with a nonsymmetric term},
 {Transactions of the American Mathematical Society}, {323}, ({1991}), {877--895}.
	
	%Type = Article
	\bibitem{Giacomoni2009}
	{J.~Giacomoni}, {K.~Saoudi},
	{Multiplicity of positive solutions for a singular and
		critical problem},
	 {Nonlinear Analysis: Theory, Methods {\&}
		Applications}, {71}, ({2009}), {4060--4077}. 

	\bibitem{Giacomoni2007}
	{J.~Giacomoni}, {I.~Schindler},
	{P.~Tak\'{a}\v{c}},
	 {Sobolev versus {H}\"{o}lder local minimizers and
		existence of multiple solutions for a singular quasilinear equation},
	 {Annali della Scuola Normale Superiore di
		Pisa-Classe di Scienze. Serie 5,} {6}, ({2007}), {117--158}.
	%Type = Article
	\bibitem{Ghosh2019}
	{S.~Ghosh}, {D.~Choudhuri},
	 {Existence of infinitely many solutions for a nonlocal
		elliptic {PDE} involving singularity},
	 {Positivity}, {17}
	({2020}), {97--115}.	
	%Type = Article
	\bibitem{Haitao2003}
	{Y.~Haitao},
	 {Multiplicity and asymptotic behavior of positive
		solutions for a singular semilinear elliptic problem},
	 {Journal of Differential Equations},{189}, ({2003}), {487--512}.
	%Type = Article
	\bibitem{Hsini2019}
	{M.~Hsini},
 {Multiplicity results for a Kirchhoff singular problem involving the fractional $p$-{L}aplacian},
 {J. Appl. Anal. Comput.}, {9}, ({2019}), {884--900}.
	%Type = Article
	\bibitem{Kajikiya2005}
	{R.~Kajikiya},
	 {A critical point theorem related to the symmetric
		mountain pass lemma and its applications to elliptic equations},
	 {Journal of Functional Analysis}, {225}, ({2005}), {352--370}.
	%Type = Article
	\bibitem{Khiddi2020}
	{M.~Khiddi}, {S.~M. Sbai},
	 {Infinitely many solutions for non-local elliptic
		non-degenerate {$p$}-{K}irchhoff equations with critical exponent},
	 {Complex Variables and Elliptic Equations. An
		International Journal}, {65}, ({2020}), {368--380}. 
	%Type = Article
	\bibitem{Laskin2000}
	{N.~Laskin},
	 {Fractional quantum mechanics and {L}\'{e}vy path
		integrals},
	{Physics Letters. A}, 268, (2000) {298--305}.
	%Type = Article
	\bibitem{Lazer1991}
	{A.~C. Lazer}, {P.~J. McKenna},
	 {On a singular nonlinear elliptic boundary value
		problem},
	{Proceedings of the American Mathematical Society}
	{111} ({1991}), {721--730}.
	%Type = Article
	\bibitem{Li2009}
	{G.~Li}, {G.~Zhang},
 {Multiple solutions for the {$p\& q$}-{L}aplacian
		problem with critical exponent},
 {Acta Mathematica Scientia. Series B. English
		Edition}, {29}, ({2009}), {903--918}. 
	%Type = Article
\bibitem{Lions1985}
{P.-L. Lions},
 {The concentration-compactness principle in the
	calculus of variations. {T}he limit case. {I}},
 {Revista Matem\'{a}tica Iberoamericana}
, {1}, ({1985} (a)), {145--201}. 
%Type = Article
\bibitem{Lions1985a}
{P.-L. Lions},
 {The concentration-compactness principle in the
	calculus of variations. {T}he limit case. {II}},
 {Revista Matem\'{a}tica Iberoamericana}
, {1}, ({1985}({b}), {45--121}. 

	\bibitem{Bisci2014}
	{G.~Molica~Bisci},
	 Sequences of weak solutions for fractional
		equations,
	 {Mathematical Research Letters}, {21}
	({2014}), {241--253}.
	%Type = Book
	\bibitem{Bisci2016}
	{G.~Molica~Bisci}, {V.~D. Radulescu},
	{R.~Servadei}, {Variational methods for
		nonlocal fractional problems}, (162) 
	series: Encyclopedia of Mathematics and its Applications,
	Cambridge University Press, Cambridge,
	2016.
	%Type = Article
	\bibitem{Mukherjee2016}
	{T.~Mukherjee}, {K.~Sreenadh},
	 {On dirichlet problem for fractional p-laplacian with singular non-linearity},
	 {journal}{Advances in Nonlinear Analysis, {8}
	({2016}), {52--72}.
	%Type = Book
	\bibitem{Papageorgiou2019}
	{N.~S. Papageorgiou}, {V.~D. R\u{a}dulescu},
	{D.~D. Repov\v{s}}, {Nonlinear
		{A}nalysis-{T}heory and {M}ethods}, Springer Monographs in Mathematics,
	{Springer, Cham}, (2019).
	%Type = Article
	\bibitem{Pucci2016}
	{P.~Pucci}, {M.~Xiang},
	{B.~Zhang},
	 {Existence and multiplicity of entire solutions for
		fractional {$p$}-{K}irchhoff equations},
	{Advances in Nonlinear Analysis}, {5}
	({2016}), {27--55}.
	%Type = Book
	\bibitem{Rabinowitz1986}
	{P.~H. Rabinowitz}, {Minimax methods in critical
		point theory with applications to differential equations},
	{65}, {CBMS Regional
			Conference Series in Mathematics}}, {Published for the
		Conference Board of the Mathematical Sciences, Washington, DC; by the
		American Mathematical Society, Providence, RI}, {1986}.
	%Type = Article
	\bibitem{Saoudi2017}
	{K.~Saoudi},
	 {A critical fractional elliptic equation with singular
		nonlinearities},
	{Fractional Calculus and Applied Analysis}, {20}, ({2017}), {1507--1530}.
	%Type = Article
	\bibitem{Saoudi2019}
	{K.~Saoudi}, {S.~Ghosh}, {D.~Choudhuri},
	 {Multiplicity and {H}\"{o}lder regularity of solutions
		for a nonlocal elliptic {PDE} involving singularity},
	 {Journal of Mathematical Physics}
	{60}, ({2019}), Art I: {101509}.
	%Type = Article
	\bibitem{Servadei2013}
	{R.~Servadei}, {E.~Valdinoci},
	 {Variational methods for non-local operators of
		elliptic type},
	 {journal}{Discrete {\&} Continuous Dynamical Systems - A}
	{33}, ({2013}), {2105--2137}.
	%Type = Article
	\bibitem{Servadei2012}
	{R.~Servadei}, {E.~Valdinoci},
	 {Mountain pass solutions for non-local elliptic
		operators},
	 {journal}{Journal of Mathematical Analysis and Applications}
	{389}, ({2012}), {887--898}.
	%Type = Article
	\bibitem{Silvestre2007}
	{L.~Silvestre},
	 {Regularity of the obstacle problem for a fractional
		power of the {L}aplace operator},
	 {Communications on Pure and Applied Mathematics}
	, {60}, ({2007}), {67--112}. 
	%Type = Article
	\bibitem{Wang2021}
	{F.~Wang}, {D.~Hu},
	{M.~Xiang},
 {Combined effects of Choquard and singular nonlinearities in fractional Kirchhoff problems},
 {Adv. Nonlinear Anal.}, {10}, 
	({2021}), {636--658}.
	%Type = Book
	\bibitem{Willem1997}
	{M.~Willem}, {{M}inimax {T}heorems},
	{24}, {Progress in Nonlinear
			Differential Equations and Their Applications}, {First edition}
	, {Birkh\"{a}user}, {Boston}, {1997}. 
	%Type = Article
	\bibitem{Xiang2015a}
	{M.~Xiang}, {B.~Zhang},
	{X.~Guo},
	 {Infinitely many solutions for a fractional
		{K}irchhoff type problem via fountain theorem},
	 {Nonlinear Analysis. Theory, Methods \&
		Applications}, {120}, ({2015}), {299--313}.
	%Type = Article
	\bibitem{Xiang2016}
	{M.~Xiang}, {B.~Zhang}, {V.~D.
		R\u{a}dulescu},
	 {Multiplicity of solutions for a class of quasilinear
		{K}irchhoff system involving the fractional {$p$}-{L}aplacian},
	 {Nonlinearity}, {29}, 
	({2016}), {3186--3205}.
	%Type = Article
	\bibitem{Xiang2017}
	{M.~Xiang}, {B.~Zhang},
	{X.~Zhang},
 {A nonhomogeneous fractional {$p$}-{K}irchhoff type
		problem involving critical exponent in {$\mathbb{R}^N$}},
 {Advanced Nonlinear Studies}, {17}, 
	({2017}), {611--640}.

	
\end{thebibliography}
\end{document}